\newtheorem{theorem}{Theorem}
\newtheorem{corollary}{Corollary}
\newtheorem{lemma}{Lemma}
\newtheorem{proposition}{Proposition}
\newtheorem{remark}{Remark}
\theoremstyle{remark}
\theoremstyle{remark}
\newcommand{\1}{\mathbf{1}}
\newcommand{\N}{\mathbb{N}}
\newcommand{\dd}{\mathrm{d}}
\begin{document}

\title{A Weak Convergence Criterion Constructing Changes of Measure\thanks{%
We thank Richard Davis, Paul Embrechts, and Thomas Mikosch for putting
together a very interesting Oberwolfach seminar, where this project started.
We are grateful to Kay Giesecke, Peter Glynn, Jan Kallsen, Ioannis Karatzas,
and Philip Protter for stimulating conversations on topics related to the
theme of this project, and to Philippe Charmoy, Zhenyu Cui, Roseline
Falafala, and Nicolas Perkowski for their comments on an earlier version of
this note.}}
\author{Jose Blanchet\thanks{%
E-Mail: jose.blanchet@columbia.edu} \\
Department of Operations Research and Industrial Engineering\\
Columbia University \and Johannes Ruf\thanks{%
E-Mail: johannes.ruf@oxford-man.ox.ac.uk} \\
Oxford-Man Institute of Quantitative Finance and Mathematical Institute\\
University of Oxford}
\maketitle

\begin{abstract}
Based on a weak convergence argument, we provide a necessary and sufficient
condition that guarantees that a nonnegative local martingale is indeed a
martingale. Typically, conditions of this sort are expressed in terms of
integrability conditions (such as the well-known Novikov condition). The
weak convergence approach that we propose allows to replace integrability
conditions by a suitable tightness condition. We then provide several
applications of this approach ranging from simplified proofs of classical
results to characterizations of processes conditioned on first passage time
events and changes of measures for jump processes.
\end{abstract}



\section{Introduction}

Changing the probability measure is a powerful tool in modern probability.
Changes of measure arise in areas of wide applicability such as in
mathematical finance, in the setting of so-called equivalent pricing
measures. A change of probability measure often relies on the specification
of a nonnegative martingale process which in turn yields the underlying
Radon-Nikodym derivative behind the change of measure.

The key step in the typical construction of changes of measure involves
showing the martingale property of a process of putative Radon-Nikodym
derivatives. In order to verify this martingale property one often starts by
defining a process that easily can be seen to be a local martingale. This is
the standard situation, for example, in changes of measure for diffusion
processes; in this framework, a standard application of It\^{o}'s formula
guarantees that a candidate exponential process is a local martingale. The
difficult part then involves ensuring that the local martingale is actually
a martingale.

Since the distinction between local martingales and martingales involves
verification of integrability properties (the ones behind the strict
definition of a martingale), it is most natural to search for a criterion
based on integrability of the underlying local martingale. This is the
basis, for instance, of the so-called Novikov's condition, which is a
well-known criterion used to verify the martingale property of an
exponential local martingale in the diffusion setting. Nevertheless, if
ultimately one has the existence of a new probability measure, then one has
a martingale defined by the corresponding change of measure. Thus, it
appears that lifting the local martingale property for a nonnegative
stochastic process to a bona-fide martingale property has more to do with
the fact that the induced probability measure is indeed well-defined.

Our contribution in this note consists in putting into focus the aspect of
tightness when proving the martingale property of a nonnegative local
martingale. Connecting tightness with the verification of the martingale
property is an almost trivial exercise, formulated in Theorem~\ref{T
mainmain} below. Although only a very simple observation, this point of view
is powerful as the applications in Section~\ref{S ex} illustrate. In
particular, we illustrate our result in the context of the following four
applications:

\begin{enumerate}
\item We provide a new proof of the result by \citet{Benes_1971} on the
existence of weak solutions to certain stochastic differential equations.

\item We prove the equivalence of weak solutions to stochastic differential equations that involve compound Poisson processes, whose
intensity may depend on the current state of the system.

\item We weaken the assumptions of \citet{Giesecke_2013} that yield the
martingale property of certain local martingales involving counting processes.

\item We provide a new representation for conditional expectations of an Ornstein-Uhlenbeck process conditioned to hit a large
level before hitting zero. We believe that this representation is useful for simulation purposes.
\end{enumerate}

For the sake of clear notation, for a sequence of random variables $%
\{Y_{n}\}_{n\in\mathbb{N}}$, each defined on a probability space $\left(
\Omega _{n},\mathcal{F}_{n},P_{n}\right) $, and a random variable $Y$,
defined on a probability space $\left( \Omega,\mathcal{F},P\right) $, we
write
\begin{equation*}
(P_{n},Y_{n})\overset{\mathfrak{w}}{\Longrightarrow}(P,Y) \text{ } (n
\uparrow \infty) \qquad \text{if} \qquad \lim_{n \uparrow \infty}
P_{n}\left( Y_{n}\leq x\right) = P\left( Y\leq x\right) \text{ for each
continuity point $x$ of $P\left( Y\leq \cdot\right)$}.
\end{equation*}

The proof of the following Theorem~\ref{T mainmain} is very simple and only
relies on the definition of tightness; it is given in Section~\ref{S main}.

\begin{theorem}
The following two statements hold: \label{T mainmain}

\begin{enumerate}
\item Let $M=\{M\left( t\right) \}_{t\geq0}$ denote a nonnegative sub- or
supermartingale on a filtered probability space $\left( \Omega,\mathcal{F},
\{\mathcal{F}(t)\}_{t \geq0}, P\right) $ with corresponding expectation
operator $E$, and let $\{M_{n}\}_{n \in\mathbb{N}}$ with $%
M_{n}=\{M_{n}\left( t\right) \}_{t\geq0}$ denote a sequence of nonnegative
martingales, each defined on a filtered probability space $\left( \Omega_{n},%
\mathcal{F}_{n}, \{\mathcal{F}_{n}(t)\}_{t \geq0}, P_{n}\right) $ with
corresponding expectation operators $E_{n}$ such that $M_{n}(0) = 1$. Fix
any sequence of (deterministic) times $\{t_{m}\}_{m\in \mathbb{N}}$ with $%
t_{1}=0$ and $\lim_{m\uparrow \infty }t_{m}=\infty $ and assume that $%
(P_{n},M_{n}(t_{m}))\overset{\mathfrak{w}}{\Longrightarrow }(P,M(t_{m}))$ $%
(n \uparrow \infty)$ for each $m\in \mathbb{N}$. Define a family $%
\{Q_{n}^{m}\}_{n,m \in\mathbb{N}}$ of probability measures via $\mathrm{d}%
Q_{n}^{m}=M_{n}(t_{m})\mathrm{d}P_{n}$.

Then $M$ is a true martingale with $M(0)=1$ if and only if
\begin{align}  \label{E Q tightness}
\sup_{n\in \mathbb{N}}Q_{n}^{m}(M_{n}(t_{m})\geq \kappa )\rightarrow 0
\end{align}%
as $\kappa \uparrow \infty $ for each $m\in \mathbb{N}$. That is, $M$ is a
true martingale if and only if $\{M_{n}(t_{m})\}_{n\in \mathbb{N}}$ is tight
under the sequence of measures $\{Q_{n}^{m}\}_{n\in \mathbb{N}}$ for each $%
m\in \mathbb{N}$.

\item Let $M(\infty)$ denote a nonnegative random variable on a probability
space $\left( \Omega,\mathcal{F}, P\right) $ with corresponding expectation
operator $E$, and let $\{M_{n}(\infty)\}_{n \in\mathbb{N}}$ denote a
sequence of nonnegative random variables, each defined on a probability
space $\left( \Omega_{n},\mathcal{F}_{n}, P_{n}\right) $ with corresponding
expectation operators $E_{n}$ such that $E_n[M_{n}(\infty)] = 1$. Assume
that $(P_{n},M_{n}(\infty))\overset{\mathfrak{w}}{\Longrightarrow }%
(P,M(\infty))$ $(n \uparrow \infty)$. Define a family $\{Q_{n}\}_{n \in%
\mathbb{N}}$ of probability measures via $\mathrm{d}Q_{n}=M_{n}(\infty)%
\mathrm{d}P_{n}$.

Then ${\mathbb{E}}[M(\infty)]=1$ holds if and only if
\begin{equation*}
\sup_{n\in \mathbb{N}}Q_{n}(M_{n}(\infty )\geq \kappa )\rightarrow 0
\end{equation*}%
 as $\kappa \uparrow \infty $.
\end{enumerate}
\end{theorem}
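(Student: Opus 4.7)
The entire theorem is essentially a transcription of the classical equivalence, for a weakly convergent sequence of nonnegative random variables, between uniform integrability and convergence of expectations. First I would make the elementary algebraic observation that, because $\mathrm{d}Q_n^m = M_n(t_m)\,\mathrm{d}P_n$,
\begin{equation*}
Q_n^m\bigl(M_n(t_m) \geq \kappa\bigr) \;=\; E_n\bigl[M_n(t_m)\mathbf{1}_{\{M_n(t_m) \geq \kappa\}}\bigr],
\end{equation*}
so that condition \eqref{E Q tightness} is precisely the assertion that $\{M_n(t_m)\}_{n\in\N}$ is uniformly integrable under $\{P_n\}_{n \in \N}$. The analogous identity with $\infty$ in place of $t_m$ handles the bookkeeping in Part~2.

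The analytical engine I would then invoke is the following Vitali-type equivalence: if $X_n \geq 0$ satisfies $(P_n, X_n) \overset{\mathfrak{w}}{\Longrightarrow} (P, X)$ and $\sup_n E_n[X_n] < \infty$, then $E_n[X_n] \to E[X]$ if and only if $\{X_n\}$ is uniformly integrable under $\{P_n\}$. The ``if'' direction follows from Skorokhod's representation theorem, which allows one to realize $X_n \to X$ almost surely on a single probability space, after which uniform integrability upgrades Fatou's inequality to an equality. The ``only if'' direction is classical: a.s.\ convergence of nonnegative random variables together with convergence of their expectations to a finite limit forces uniform integrability.

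Applying this with $X_n = M_n(t_m)$ and using that $E_n[M_n(t_m)] = M_n(0) = 1$ identically (by the martingale property of each $M_n$), the tightness condition \eqref{E Q tightness} becomes equivalent to the single scalar equation $E[M(t_m)] = 1$. To finish Part~1 I would observe that since $M_n(0) = 1$ deterministically, the weak limit $M(0)$ is almost surely the constant $1$; and since $M$ is a nonnegative sub- or supermartingale, the map $t \mapsto E[M(t)]$ is monotone. Combined with $t_1 = 0$ and $t_m \uparrow \infty$, the equalities $E[M(t_m)] = 1$ sandwich $E[M(t)]$ to equal $1$ for every $t \geq 0$, which promotes $M$ to a true martingale. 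Part~2 needs no such upgrade; the equivalence there is direct.

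The only genuine subtlety is that the $X_n$ live on distinct probability spaces $(\Omega_n, \mathcal{F}_n, P_n)$. However, the Vitali equivalence above depends only on the laws of $M_n(t_m)$ on $[0,\infty)$, so a single application of Skorokhod's representation, transferring all these one-dimensional laws to a common space, dispatches this point cleanly. Everything else is bookkeeping.
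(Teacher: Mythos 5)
Your proposal is correct, and the overall strategy is the same as the paper's: rewrite the tightness condition \eqref{E Q tightness} as uniform integrability of $\{M_n(t_m)\}_n$ under $\{P_n\}_n$ via the identity $Q_n^m(M_n(t_m)\ge\kappa)=E_n[M_n(t_m)\1_{\{M_n(t_m)\ge\kappa\}}]$, reduce the martingale statement to the scalar identities $E[M(t_m)]=1$, and then upgrade to the martingale property by monotonicity of $t\mapsto E[M(t)]$ and the sandwiching along $t_m\uparrow\infty$. The difference lies in how you establish the key Vitali-type equivalence (the paper's Proposition~\ref{T main}). The paper proves it directly and self-containedly with Portmanteau-style test-function estimates: for one direction it uses a continuous truncation $f$ with $f\le x$, $f=x$ on $[0,\kappa-1]$, $f=0$ on $[\kappa,\infty)$ to bound $E_n[Y_n\1_{\{Y_n\ge\kappa\}}]$ by $E[Y\1_{\{Y>\kappa-1\}}]+o(1)$; for the other direction it uses $f(x)=x\wedge\kappa$ and $\liminf$ bounds to squeeze $E[Y]$ to $1$. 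You instead route through Skorokhod's representation theorem to manufacture a.s.\ convergence on a single space, and then invoke the classical Vitali/Scheffé package there, transferring uniform integrability back and forth by noting it is a law property. Both are valid; the paper's argument is more elementary (no Skorokhod needed, only the definition of weak convergence against bounded continuous functions), which is arguably in keeping with the note's aim of keeping the criterion lightweight, whereas your route trades a couple of inequalities for a heavier off-the-shelf theorem and is quicker to state if one takes Skorokhod and Scheffé as given. One small quantitative point you leave implicit: passing from the paper's $\limsup_n E_n[Y_n\1_{\{Y_n\ge\kappa\}}]\le E[Y\1_{\{Y>\kappa-1\}}]$ to the required $\sup_n$ needs the additional (easy) remark that each fixed term $E_n[Y_n\1_{\{Y_n\ge\kappa\}}]\to 0$ as $\kappa\uparrow\infty$ because $Y_n$ is integrable, so the finitely many early indices are handled by enlarging $\kappa$; your Skorokhod route sidesteps this bookkeeping since uniform integrability is obtained directly.
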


It is important to note that showing the martingale property of the
underlying positive local martingale becomes an exercise in tightness in a
very weak topology. Given the enormous literature on weak convergence
analysis of stochastic processes, we feel that our test of martingality
would be a useful one. For example, in order to show tightness of a sequence
of random variables $\{A_{n}\}_{n\in \mathbb{N}}$ of the form $A_{n}=\exp
(B_{n}+C_{n})$ it is sufficient to show tightness for the sequences of
random variables $\{B_{n}\}_{n\in \mathbb{N}}$ and $\{C_{n}\}_{n\in \mathbb{N%
}}$ separately; a task that is often easy, as we shall illustrate in Section~\ref{S ex}. In
addition, the martingale property of a natural approximation to the local
martingale process of interest is usually immediately seen to be a
martingale.

\subsection*{Relevant literature}

The standard way to show the martingale property of a nonnegative local
martingale is to check some standard integrability condition; see for
example \citet{Novikov}, \citet{Kazamaki_1983}, or \citet{Ruf_Novikov}. If
the local martingale dynamics include jumps, a case that we explicitly allow
here, then integrability conditions exist but they might not be trivial to
check; see \citet{Lepingle_Memin_Sur} and \citet{Protter_Shimbo} for such
conditions and related literature.

Under additional assumptions on the local martingale, such as the assumption
that it is constructed via an underlying Markovian process, further
sufficient (and sometimes also necessary) criteria can be derived. Here we
only provide the reader with some pointers to this vast literature. The
following papers develop conditions different from Novikov-type conditions
by utilizing the (assumed) Markovian structure of some underlying stochastic
process, and contain a far more complete list of references: \citet{CFY}, %
\citet{Blei_Engelbert_2009}, \citet{MU_martingale}, and %
\citet{Ruf_martingale}. \citet{KMK2010} study the martingale property of
stochastic exponentials of affine processes; their approach via the explicit
construction of a candidate measure and the use of a simple lemma in %
\citet{JacodS} is close in spirit to our approach.

The weak existence of solutions to stochastic differential equations is
often proven by means of changing the probability measure, see for example %
\citet{Portenko_1975}, \citet{Engelbert_Schmidt_1984}, \citet{Yan_1988}, or %
\citet{Stummer_1993}. This strategy for proving the weak existence of
solutions requires the true martingale property of the putative
Radon-Nikodym density. Our approach to prove the martingale property of such
a density is in the spirit of the reverse direction: The tightness condition
that implies the martingale property of a putative Radon-Nikodym density by
Theorem~\ref{T mainmain} corresponds basically to the asserted existence of
a certain probability measure --- often corresponding to the existence of a
solution to a stochastic differential equation.

\section{Martingale property and tightness}

\label{S main} In this section, we prove Theorem~\ref{T mainmain} and make
some related observations. The proof of Theorem~\ref{T mainmain} relies on
the following simple but powerful result:

\begin{proposition}
\label{T main} Let $Y$ denote a nonnegative random variable defined on $%
\left( \Omega,\mathcal{F}, P\right) $ with corresponding expectation
operator $E$, and let $\{Y_{n}\}_{n \in\mathbb{N}}$ denote a sequence of
integrable, nonnegative random variables defined on $\left( \Omega _{n},%
\mathcal{F}_{n}, P_{n}\right) $ with corresponding expectation operators $%
E_{n}$ such that $(P_{n},Y_{n} )\overset{\mathfrak{w}}{\Longrightarrow}(P,Y
) $ $(n \uparrow \infty)$ and $\lim_{n \uparrow\infty} E_{n}[Y_{n}] = 1$.
Then, $E[Y] = 1$ holds if and only if
\begin{equation*}
\sup_{n \in\mathbb{N}} E_{n}\left[ Y_{n} \mathbf{1}_{\{Y_{n} \geq\kappa \}}%
\right] \rightarrow0
\end{equation*}
does as $\kappa\uparrow\infty$.
\end{proposition}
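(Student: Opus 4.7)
The plan is to interpret this proposition as a weak-convergence analogue of the classical equivalence between uniform integrability and $L^1$-convergence for almost surely convergent sequences. The single technical bridge is the truncation map $y \mapsto y \wedge \kappa$, which is bounded and continuous on $[0,\infty)$, so weak convergence yields
\[
\lim_{n \uparrow \infty} E_n[Y_n \wedge \kappa] = E[Y \wedge \kappa] \qquad \text{for every } \kappa > 0.
\]
This identity is the only consequence of the weak-convergence hypothesis that I would use, and it appears in both directions.

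For the direction ($\Leftarrow$), assume the tail condition. Since $Y_n \wedge \kappa \leq Y_n$, taking $n \uparrow \infty$ in the truncation identity yields $E[Y \wedge \kappa] \leq 1$, and monotone convergence as $\kappa \uparrow \infty$ gives $E[Y] \leq 1$. For the opposite inequality I would apply the pointwise bound $Y_n \leq Y_n \wedge \kappa + Y_n \mathbf{1}_{\{Y_n \geq \kappa\}}$, take expectations, pass $n \uparrow \infty$ via the truncation identity, and obtain
\[
1 \leq E[Y \wedge \kappa] + \sup_{n \in \mathbb{N}} E_n[Y_n \mathbf{1}_{\{Y_n \geq \kappa\}}];
\]
letting $\kappa \uparrow \infty$ and invoking the tail hypothesis together with monotone convergence delivers $1 \leq E[Y]$.

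For the direction ($\Rightarrow$), assume $E[Y] = 1$. The key elementary estimate is
\[
y \mathbf{1}_{\{y \geq \kappa\}} \leq 2 \bigl( y - y \wedge \tfrac{\kappa}{2} \bigr) \qquad \text{for all } y \geq 0,
\]
which holds because on $\{y \geq \kappa\}$ the right-hand side equals $2y - \kappa \geq y$. Applying this to $Y_n$, taking expectations, and passing $n \uparrow \infty$ through the truncation identity yields
\[
\limsup_{n \uparrow \infty} E_n\bigl[Y_n \mathbf{1}_{\{Y_n \geq \kappa\}}\bigr] \leq 2 \bigl( 1 - E[Y \wedge \tfrac{\kappa}{2}] \bigr) = 2 E\bigl[(Y - \tfrac{\kappa}{2})^+\bigr],
\]
and the right-hand side tends to zero as $\kappa \uparrow \infty$ by dominated convergence, since $E[Y] = 1 < \infty$.

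The main obstacle is that the proposition demands control of the \emph{supremum} over all $n$, while the previous step only bounds the limsup. This gap is closed by the assumption that each $Y_n$ is individually integrable: for every fixed $n$ one has $E_n[Y_n \mathbf{1}_{\{Y_n \geq \kappa\}}] \to 0$ as $\kappa \uparrow \infty$, so the finitely many indices for which the limsup has not yet become small can be absorbed by enlarging $\kappa$. A secondary subtlety, worth flagging, is that $y \mapsto y \mathbf{1}_{\{y \geq \kappa\}}$ is itself neither bounded nor continuous; this is precisely why I would bracket it between continuous truncations of the form $y \wedge \kappa$ instead of applying weak convergence to it directly.
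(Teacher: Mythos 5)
Your proof is correct and follows essentially the same route as the paper: both directions rest on applying weak convergence (Portmanteau) to bounded continuous truncations, combined with the assumption $\lim_n E_n[Y_n]=1$; your $y\mapsto y\wedge\kappa$ with the elementary bound $y\mathbf{1}_{\{y\geq\kappa\}}\leq 2(y-y\wedge\tfrac{\kappa}{2})$ plays exactly the role of the paper's piecewise-linear $f$ with $f=x$ on $[0,\kappa-1]$ and $f=0$ on $[\kappa,\infty)$. You are in fact more explicit than the paper about one genuine subtlety, namely that the weak-convergence argument only bounds $\limsup_n$ and that the per-$n$ integrability of $Y_n$ is what upgrades this to a bound on $\sup_n$.
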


\begin{proof}
Assume that $E[Y] = 1$. Then, for fixed $\kappa>1$ and for a continuous
function $f: [0,\infty] \rightarrow[0,\kappa]$ with $f(x) \leq x$ for all $x
\geq 0$, $f(x) = x$ for all $x \in[0, \kappa-1]$ and $f(x) = 0 $ for all $x
\in[\kappa, \infty)$, we compute that
\begin{align*}
E_n[Y_n \mathbf{1}_{\{Y_n \geq \kappa\}}] &= E_n[Y_n] - E_{n}[Y_n \mathbf{1}%
_{\{Y_{n} <\kappa\}} ] \leq E_n[Y_n] -E_{n}[f(Y_n) ] \\
& \rightarrow1 - E[f( Y) ] \leq1 - E[Y \mathbf{1}_{\{Y \leq\kappa- 1\}}] =
E[Y \mathbf{1}_{\{Y > \kappa- 1\}} ]
\end{align*}
as $n \uparrow\infty$. As $E[Y \mathbf{1}_{\{Y > \kappa- 1\}} ]$ can be made
arbitrarily small by increasing $\kappa$ (because $Y$ is integrable by
assumption), we obtain one direction of the statement. For the other
direction, fix $\epsilon>0$ and the continuous, bounded function $f:
[0,\infty] \rightarrow {\mathbb{R}}$ with $f(x) =x \wedge \kappa$ for all $x
\geq 0$. Then
\begin{align*}
E[Y] \geq E[f(Y)] = \lim_{n \uparrow \infty} E_n[f(Y_n)] \geq \liminf_{n
\uparrow \infty} E_n[Y_n \mathbf{1}_{\{Y_n < \kappa\}}] = \liminf_{n
\uparrow \infty} \left(E_n[Y_n] - E_n[Y_n \mathbf{1}_{\{Y_n \geq \kappa\}}]
\right) \geq 1-\epsilon
\end{align*}
for $\kappa$ large enough. This yields $E[Y] \geq 1$. Similarly, we can show
that $E[Y] \leq 1$.
\end{proof}

We are now ready to prove Theorem~\ref{T mainmain}:

\begin{proof}[Proof of Theorem~\protect\ref{T mainmain}] The second statement is a (slightly weakened) reformulation of Proposition~\ref{T main}.
For the first statement, observe that \eqref{E Q tightness} and the
martingale property of all processes $M_{n}$ imply that $E[M(t_{m})] = 1$
for all $m \in\mathbb{N}$ by Proposition~\ref{T main}. Since $E[M(t)]$ is
assumed to be monotone in $t$, this yields the martingale property of $M$.
The reverse direction is a direct application of the same proposition.
\end{proof}

A look at its proof yields that the statement of Theorem~\ref{T mainmain}
can be further generalized since for each $t_{m}$ a different approximating
sequence of true martingales might be used.

The following corollary can be interpreted as a generalization of
Theorem~1.3.5 in \citet{SV_multi} to processes with jumps. See also
Lemma~III.3.3 in \citet{JacodS} for a similar statement where a certain
candidate measure $Q$ is assumed to exist. We remark that the sequence of
stopping times in the statement could, but need not, be a localization
sequence of a local martingale; for example, it is sufficient that the
stopping times converge to the first hitting time that the underlying local
martingale hits zero.

\begin{corollary}
\label{cor stopping times} Let $\{\tau _{n}\}_{n\in \mathbb{N}}$ be a
sequence of stopping times and $M_{n}\equiv M^{\tau _{n}}$ the stopped
versions of a nonnegative local martingale $M$ with $M(0)=1$. Assume that $%
M_{n}(t)\rightarrow M(t)$ $P$-a.s.~as $n\uparrow \infty $ for all $t>0$ and
that, for each fixed $n$, $M_{n}$ is a uniformly integrable martingale.
Further, define $\mathrm{d}Q_{n}=M_{n}(\infty )\mathrm{d}P$. Under these
assumptions the following statements hold: If $Q_{n}(\tau _{n}\leq
t)\rightarrow 0$ as $n\uparrow \infty $ for all $t>0$, then $M$ is a
martingale. Further, under the additional assumption that $\tau
_{n}\rightarrow \infty $ $P$-a.s. as $n\uparrow \infty $, the converse also
holds; that is, if $M$ is a martingale then $Q_{n}(\tau _{n}\leq
t)\rightarrow 0$ as $n\uparrow \infty $ for all $t>0$.
\end{corollary}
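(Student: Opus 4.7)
The plan is to treat the two implications separately: the forward direction goes through Theorem~\ref{T mainmain}(1), while the converse is a short direct computation.

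For the forward direction I would apply Theorem~\ref{T mainmain}(1) on the common probability space (so $P_n=P$ for all $n$), choosing any deterministic sequence $t_m\uparrow\infty$ with $t_1=0$. The weak convergence hypothesis is immediate from the assumed $P$-a.s.~convergence $M_n(t)\to M(t)$, so everything reduces to verifying the tightness condition \eqref{E Q tightness}. The key observation is that since each $M_n$ is a uniformly integrable martingale, the measure $Q_n^m=M_n(t_m)\,\mathrm{d}P$ agrees with $Q_n$ on $\mathcal{F}(t_m)$, and on $\{\tau_n>t_m\}$ one has $M_n(t_m)=M(t_m)$. Splitting on the stopping event then yields
\begin{equation*}
Q_n^m(M_n(t_m)\geq\kappa) \;\leq\; Q_n(\tau_n\leq t_m) + E\bigl[M(t_m)\mathbf{1}_{\{M(t_m)\geq\kappa\}}\bigr].
\end{equation*}
The second term is independent of $n$ and tends to $0$ as $\kappa\uparrow\infty$ since $M(t_m)$ is $P$-integrable by Fatou applied to $E[M_n(t_m)]=1$; the first term tends to $0$ as $n\uparrow\infty$ by hypothesis, and the finitely many initial indices $n$ are absorbed by enlarging $\kappa$ further, since $M_n(t_m)$ is finite $Q_n^m$-almost surely for each such $n$.

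For the converse, assuming $M$ is a martingale and $\tau_n\to\infty$ $P$-a.s., I would compute $Q_n(\tau_n\leq t)$ directly. Since $M_n$ is a UI martingale and $\{\tau_n\leq t\}\in\mathcal{F}(t)$, one has $Q_n(\tau_n\leq t)=E[M_n(t)\mathbf{1}_{\{\tau_n\leq t\}}]=E[M(\tau_n\wedge t)\mathbf{1}_{\{\tau_n\leq t\}}]$. Since $M$ is itself a martingale, optional stopping at the bounded time $\tau_n\wedge t$ combined with $\{\tau_n\leq t\}\in\mathcal{F}(\tau_n\wedge t)$ yields $E[M(\tau_n\wedge t)\mathbf{1}_{\{\tau_n\leq t\}}]=E[M(t)\mathbf{1}_{\{\tau_n\leq t\}}]$. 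Since $\tau_n\to\infty$ $P$-a.s.~and $M(t)$ is $P$-integrable, dominated convergence then yields $Q_n(\tau_n\leq t)\to 0$.

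The most delicate step is the use of optional stopping on $M$ itself in the converse, since this is the one place where the assumed martingale property of $M$ is actively used; in the forward direction the main conceptual point is simply to recognize that the tightness condition \eqref{E Q tightness} is essentially $Q_n(\tau_n\leq t_m)\to 0$ in disguise, once the pre-stopping contribution is controlled via $P$-integrability of $M(t_m)$.
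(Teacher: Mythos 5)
Your proof is correct and follows essentially the same route as the paper: both arguments reduce the forward direction to the tightness condition of Theorem~\ref{T mainmain}(1) via the decomposition $Q_n(M_n(t)\geq\kappa)\leq Q_n(\tau_n\leq t)+E[M(t)\mathbf{1}_{\{M(t)\geq\kappa\}}]$ using $M_n(t)\mathbf{1}_{\{\tau_n>t\}}=M(t)\mathbf{1}_{\{\tau_n>t\}}$, and both converse arguments rewrite $Q_n(\tau_n\leq t)=E[M_n(t)\mathbf{1}_{\{\tau_n\leq t\}}]=E[M(t)\mathbf{1}_{\{\tau_n\leq t\}}]$ (justified by optional stopping on the martingale $M$, which you spell out and the paper leaves implicit) and conclude by dominated convergence.
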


\begin{proof}
Fix $t$ and $\kappa>0$ and observe that $(P,M_{n}\left( t\right) )\overset{%
\mathfrak{w}}{\Longrightarrow}(P,M\left( t\right) )$  $(n\uparrow\infty)$.
Also note that for each $n\in \mathbb{N}$,
\begin{align*}
Q_{n}\left( M_{n}\left( t\right) >\kappa\right) &\leq Q_n\left(\tau_n\leq
t\right) + E\left[M_n(t) \mathbf{1}_{\{\tau_n > t\} \bigcap \{ M_{n}\left(
t\right) >\kappa\}}\right] \leq Q_n\left(\tau_n \leq t\right) + E\left[M(t)
\mathbf{1}_{ \{ M\left( t\right) >\kappa\}}\right]
\end{align*}
because $M_n(t) \mathbf{1}_{\{\tau_n > t\}} = M(t) \mathbf{1}_{\{\tau_n >
t\}}$. Since by assumption we can make the first term on the right-hand side
arbitrarily small by increasing $n$, the martingale property of $M$ follows
directly from dominated convergence and Theorem~\ref{T mainmain}. For the
reverse direction, assume that $M$ is a martingale and that $\tau_n
\rightarrow \infty$ $P$-a.s. as $n \uparrow \infty$. Then, $Q_n(\tau_n \leq
t) = E[M_n(t) \mathbf{1}_{\{\tau_n \leq t\}}] = E[M(t) \mathbf{1}_{\{\tau_n
\leq t\}}]\rightarrow 0 $ as $n\uparrow\infty$ by dominated convergence.
\end{proof}

The next result is of course well-known and only a very special case of, for
instance, the theory of BMO martingales; see for example %
\citet{Kazamaki_1994}. However, as we shall use the result below and as we
would like to make this note self-contained, we provide a proof based on the
observations we have made here before:

\begin{corollary}
\label{C bounded} Let $L=\{L\left( t\right) \}_{t\geq 0}$ denote a
continuous local martingale on some probability space. Assume there exists
some nondecreasing (deterministic) function $c:[0,\infty )\rightarrow {%
\mathbb{R}}$ such that $\min \{L_{t},\langle L\rangle _{t}\}\leq c(t)$ for
all $t\geq 0$ almost surely. Then, $M=\mathcal{E}(L):=\exp (L-\langle
L\rangle /2)$ is a martingale.
\end{corollary}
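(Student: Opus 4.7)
The plan is to apply Corollary~\ref{cor stopping times} with the localizing sequence $\tau_{n}=\inf\{t\geq 0:\langle L\rangle_{t}\geq n\}$. This is the natural sequence because (i) on $[0,\tau_{n}]$ the bracket $\langle L\rangle$ is bounded by $n$, so by the Novikov-type estimate $E[\exp(\langle L^{\tau_{n}}\rangle_{\infty}/2)]\leq e^{n/2}<\infty$, the stopped process $M_{n}:=M^{\tau_{n}}=\mathcal{E}(L^{\tau_{n}})$ is a uniformly integrable martingale; and (ii) since $\langle L\rangle$ is continuous and almost surely finite at every finite time, $\tau_{n}\uparrow\infty$ $P$-a.s., hence $M_{n}(t)\rightarrow M(t)$ $P$-a.s.\ for every $t\geq 0$. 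Thus the only thing left to verify in Corollary~\ref{cor stopping times} is that $Q_{n}(\tau_{n}\leq t)\rightarrow 0$ for every fixed $t>0$, where $\mathrm{d}Q_{n}=M_{n}(\infty)\,\mathrm{d}P$.

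To estimate $Q_{n}(\tau_{n}\leq t)$, I would pass to the Girsanov picture: under $Q_{n}$, the process
\begin{equation*}
\widetilde{L}_{s}:=L^{\tau_{n}}_{s}-\langle L\rangle^{\tau_{n}}_{s}
\end{equation*}
is a continuous local martingale whose $Q_{n}$-bracket equals $\langle L\rangle^{\tau_{n}}$ and is therefore bounded by $n$. Consequently $\widetilde{L}$ is a square-integrable martingale under $Q_{n}$ with $E_{Q_{n}}[\widetilde{L}_{t}^{2}]\leq n$. The decisive step is now to convert the hypothesis $\min\{L_{s},\langle L\rangle_{s}\}\leq c(s)$ into pointwise control on $\widetilde{L}_{t}$ on the event $\{\tau_{n}\leq t\}$. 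For $n>c(t)$, on $\{\tau_{n}\leq t\}$ we have $\langle L\rangle_{\tau_{n}}=n>c(\tau_{n})$ (using monotonicity of $c$), so the assumption forces $L_{\tau_{n}}\leq c(\tau_{n})\leq c(t)$; since $L^{\tau_{n}}$ and $\langle L\rangle^{\tau_{n}}$ are both frozen on $[\tau_{n},t]$, this gives
\begin{equation*}
\widetilde{L}_{t}=L_{\tau_{n}}-n\leq c(t)-n<0 \quad \text{on } \{\tau_{n}\leq t\}.
\end{equation*}

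Chebyshev's inequality then yields $Q_{n}(\tau_{n}\leq t)\leq Q_{n}(|\widetilde{L}_{t}|\geq n-c(t))\leq n/(n-c(t))^{2}$, which tends to zero as $n\uparrow\infty$. Corollary~\ref{cor stopping times} completes the argument.

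I expect the main obstacle to be the step that extracts a pointwise bound on $\widetilde{L}_{t}$ from the hypothesis: one has to notice that the assumption is exactly what rules out the ``bad'' scenario in which $\langle L\rangle$ grows quickly \emph{and} $L$ is large, and that the sign of $L_{\tau_{n}}-\langle L\rangle_{\tau_{n}}$ is then automatically strongly negative for large $n$. The rest is routine: checking that $M_{n}$ is a UI martingale, that $\tau_{n}\to\infty$ a.s., and that Girsanov applies in this setting are all standard for $\mathcal{E}(L^{\tau_{n}})$.
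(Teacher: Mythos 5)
Your proof is correct, and it takes a genuinely different route from the paper's. The paper localizes with $\tau_n=\inf\{t\geq 0:M(t)\geq n\}$, so that $M^{\tau_n}$ is bounded (hence trivially a UI martingale without appeal to Novikov), and then controls $Q_m(\tau_m\leq t)$ via Markov's inequality applied to the \emph{bounded} nonnegative $Q_m$-martingale $\widetilde M=M^{\tau_m}/\exp(\langle L\rangle^{\tau_m})$, after observing that the hypothesis forces the inclusion $\{\tau_m\leq t\}=\{M^{\tau_m}(t)\geq m\}\subset\{\langle L\rangle(t\wedge\tau_m)\leq c(t)\}\subset\{\widetilde M^{\tau_m}(t)>1/\epsilon\}$ for $m>\exp(c(t))/\epsilon$. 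You instead localize via the bracket, $\tau_n=\inf\{t\geq 0:\langle L\rangle_t\geq n\}$, invoke Novikov to obtain UI martingality of $M^{\tau_n}$ (which is no longer automatic, since your $M^{\tau_n}$ need not be bounded), and control $Q_n(\tau_n\leq t)$ via Chebyshev applied to the $L^2$-bounded $Q_n$-martingale $\widetilde L=L^{\tau_n}-\langle L\rangle^{\tau_n}$, after a slightly different but equally clean use of the hypothesis: at the bracket hitting time $\tau_n$ with $n>c(t)$, the bracket exceeds $c(\tau_n)$, hence $L_{\tau_n}\leq c(\tau_n)\leq c(t)$, forcing $\widetilde L_t\leq c(t)-n$ on $\{\tau_n\leq t\}$. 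Both proofs ride on Girsanov; the paper's version avoids Novikov entirely (arguably more in the spirit of the paper, which advertises a Novikov-free route to martingality), while yours trades a bounded-martingale Markov estimate for a variance/Chebyshev estimate and a Novikov step, and is perhaps closer to the textbook toolkit. One small point to make explicit: you implicitly use that the $P$-a.s.\ pathwise bound $\min\{L_s,\langle L\rangle_s\}\leq c(s)$ transfers to $Q_n$-a.s.\ because $Q_n\ll P$, and that $L_0=0$ (needed both for $M(0)=1$ and for $\widetilde L_0=0$ in the second-moment bound); neither is a gap, but worth stating.
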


\begin{proof}
For each $n \in\mathbb{N}$ let $\tau_{n}$ denote the first hitting times to
level $n$ or higher by $M$ and fix $t > 0$. Obviously, $M_{n}\equiv
M^{\tau_{n}}$ satisfies $M_{n}(t) \rightarrow M(t)$ $P$-a.s.~as $n \uparrow
\infty$. Define the probability measures $Q_{n}$ as in Corollary~\ref{cor
stopping times} and observe that $\{Q_n(\tau_n \leq t)\}_{n \in \mathbb{N}}$
is a decreasing sequence since
\begin{align*}
Q_{n+1}(\tau_{n+1} \leq t) \leq Q_{n+1}(\tau_n \leq t) = Q_n(\tau_n \leq t)
\end{align*}
for all $n \in \mathbb{N}$.

Now, fix $\epsilon \in (0,1)$ and some $m \in \mathbb{N}$ with $m >
\exp(c(t)) / \epsilon$ and observe that
\begin{equation*}
\{M^{\tau_{m}}(t) \geq m\} \subset \{L(t \wedge \tau_m) > c(t)\} \subset \{
\langle L \rangle(t \wedge \tau_m) \leq c(t) \}
\end{equation*}
holds $P$-a.s. Therefore, we have
\begin{align*}
\{\tau_m \leq t\} &= \{M^{\tau_{m}}(t) \geq m\} = \{M^{\tau_{m}}(t) \geq m\}
\cap \{ \langle L \rangle(t \wedge \tau_m) \leq c(t) \} \subset \left\{%
\widetilde{M}^{\tau_{m}}(t) > \frac{1}{\epsilon}\right\}
\end{align*}
$P$-a.s.~and thus $Q_{m}$-a.s., where $\widetilde{M} :=
M^{\tau_m}/\exp(\langle L \rangle(\tau_{m} \wedge \cdot))$ is a bounded,
nonnegative $Q_{m}$--martingale by Girsanov's theorem. Markov's inequality
then implies that $Q_{m}(\tau_{m} \leq t) \leq \epsilon$  and an application
of Corollary~\ref{cor stopping times} concludes.
\end{proof}

The next observation is useful when applying Theorem~\ref{T mainmain} in a
continuous setup:

\begin{lemma}
\label{L quadVar} Assume the notation of the first part of Theorem~\ref{T
mainmain}. Let $\{L_{n}\}_{n\in \mathbb{N}}$ denote a sequence of continuous
$Q_{n}$-local martingales with quadratic variation $\langle L_{n}\rangle $
and assume that the sequence $\{\langle L_{n}\rangle (t)\}_{n\in \mathbb{N}}$
is tight along the sequence $\{Q_{n}\}_{n\in \mathbb{N}}$ of probability
measures for some $t\in \lbrack 0,\infty ]$. Then also the sequence $%
\{L_{n}(t)\}_{n\in \mathbb{N}}$ is tight along $\{Q_{n}\}_{n\in \mathbb{N}}$.
\end{lemma}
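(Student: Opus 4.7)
The plan is to truncate each $L_{n}$ at the first time its quadratic variation reaches a large level $K$; the resulting stopped process has quadratic variation bounded by the constant $K$ and is therefore an honest $L^{2}$-bounded martingale, to which Chebyshev's inequality applies uniformly in $n$. I will assume $L_{n}(0)=0$ for every $n$ (the general case reduces to this after subtraction, or by additionally assuming tightness of $\{L_{n}(0)\}_{n}$).

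Fix $\epsilon\in(0,1)$ and $t\in[0,\infty]$. By the hypothesized tightness of $\{\langle L_{n}\rangle(t)\}_{n}$ along $\{Q_{n}\}_{n}$, pick $K>0$ so that $\sup_{n\in\mathbb{N}}Q_{n}(\langle L_{n}\rangle(t)\ge K)<\epsilon/2$. Set $\tau_{n}:=\inf\{s\ge 0:\langle L_{n}\rangle(s)\ge K\}$. Continuity and monotonicity of $\langle L_{n}\rangle$ yield $\langle L_{n}^{\tau_{n}}\rangle(s)=\langle L_{n}\rangle(s\wedge\tau_{n})\le K$ for all $s\ge 0$, so $L_{n}^{\tau_{n}}$ is a continuous local martingale whose quadratic variation is bounded by a deterministic constant. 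This is the standard situation in which such a process is an honest $L^{2}$-bounded true martingale with
\begin{equation*}
E_{Q_{n}}\bigl[L_{n}^{\tau_{n}}(s)^{2}\bigr]=E_{Q_{n}}\bigl[\langle L_{n}^{\tau_{n}}\rangle(s)\bigr]\le K
\end{equation*}
for every $s\in[0,\infty]$; for $s=\infty$ the $L^{2}$-bound together with martingale convergence supplies the almost-sure and $L^{2}$ limit $L_{n}^{\tau_{n}}(\infty)$.

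Chebyshev's inequality now gives $Q_{n}(|L_{n}^{\tau_{n}}(t)|>M)\le K/M^{2}$ uniformly in $n$, which is $<\epsilon/2$ for $M$ large. Combining this with the identity $\{\tau_{n}\le t\}=\{\langle L_{n}\rangle(t)\ge K\}$ (valid by continuity and monotonicity of $\langle L_{n}\rangle$ starting from $\langle L_{n}\rangle(0)=0$) and with $L_{n}(t)=L_{n}^{\tau_{n}}(t)$ on $\{\tau_{n}>t\}$, I obtain
\begin{equation*}
Q_{n}\bigl(|L_{n}(t)|>M\bigr)\le Q_{n}(\tau_{n}\le t)+Q_{n}\bigl(|L_{n}^{\tau_{n}}(t)|>M\bigr)<\epsilon
\end{equation*}
uniformly in $n$, which is precisely the asserted tightness of $\{L_{n}(t)\}_{n}$. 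The only nontrivial ingredient is the classical upgrade from a continuous local martingale with bounded quadratic variation to a true $L^{2}$-martingale; this also makes transparent why tightness of $\{\langle L_{n}\rangle(t)\}_{n}$ is the correct hypothesis.
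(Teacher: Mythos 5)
Your proof is correct and follows essentially the same approach as the paper's: stop $L_n$ at the first time $\langle L_n\rangle$ reaches a large level, control the second moment of the stopped process by the (now bounded) quadratic variation, apply Chebyshev's inequality, and union-bound with the tight tail of $\langle L_n\rangle(t)$. The only cosmetic differences are that the paper runs all three thresholds through a single parameter $\kappa$ (yielding the compact bound $Q_n(L_n(t)>\kappa)\le 1/\kappa+Q_n(\langle L_n\rangle(t)>\kappa)$) and justifies the moment inequality via the supermartingale property of the bounded-below local martingale $L_n^2(\cdot\wedge\rho_\kappa)-\langle L_n\rangle(\cdot\wedge\rho_\kappa)$, while you invoke the standard $L^2$-martingale fact and explicitly flag the tacit assumption $L_n(0)=0$ that both arguments need.
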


\begin{proof}
Fix $n \in \mathbb{N}$, let $\rho_{\kappa}$ denote the first hitting time to
level $\kappa$ or higher by $\langle L_n \rangle$, and observe that
\begin{align*}
Q_{n}\left( L_n(t) > \kappa\right) &\leq Q_{n}\left(L_n(t \wedge
\rho_{\kappa}) > \kappa\right) + Q_{n}\left( \rho_{\kappa} \leq t \right)
\leq \frac{E_{n}\left[L_n^2(t \wedge \rho_{\kappa})\right]}{\kappa^2} +
Q_{n}\left( \rho_{\kappa} \leq t \right) \\
&\leq \frac{1}{\kappa} + Q_{n}\left( \langle L_n\rangle (t) > \kappa\right)
\end{align*}
for all $\kappa>0$ by Chebyshev's inequality and the fact that $E_n[L_n^2(t
\wedge \rho_{\kappa})] \leq E_n[\langle L_n\rangle (t \wedge \rho_{\kappa})
] \leq \kappa$; those last inequalities follow from the observation that the process $L_n^2(\cdot \wedge \rho_{\kappa})
- \langle L_n\rangle (\cdot \wedge \rho_{\kappa})$ is a local martingale, bounded from below by $-\kappa>0$.
\end{proof}

\section{Applications}

\label{S ex}

Our goal here is to show that our approach could have advantages in terms of
its relative simplicity. We shall write $\|\cdot\|$ for the Euclidean $L_2$%
--norm on ${\mathbb{R}}^d$ for some $d \in \mathbb{N}$. We denote the space
of cadlag paths $\omega :[0,t )\rightarrow {\mathbb{R}}^{d}$ for some $d\in
\mathbb{N}$ and $t \in (0,\infty]$,
endowed with the standard Skorokhod topology, by $D_{[0,t )}({\mathbb{R}}^{d})$. For sake of brevity, we shall
use $D_{[0,\infty)} = D_{[0,\infty)}({\mathbb{R}}^1)$.

\subsection{Continuous processes: linear growth of drift}

We begin by proving an extension of the well-known result by %
\citet{Benes_1971} on the existence of weak solutions to a certain
stochastic differential equation. We discuss it to illustrate how
considerations of tightness as suggested here can often simplify the
argument that a certain process is a martingale.

\begin{theorem}
\label{P:Benes} Fix $d \in\mathbb{N}$ and let $W=\{W\left( t\right)
\}_{t\geq0}$ be a $d$-dimensional Brownian motion, $W^{*}=\{W^{*}\left(
t\right) \}_{t\geq0}$ the running maximum of its vector norm; to wit, $%
W^{*}(t) := \max_{s \in[0,t]} \{\|W(s)\|\}, $ and $Y=\{Y\left( t\right)
\}_{t\geq0}$ a nonnegative supermartingale (under the same filtration) with
cadlag paths such that $[Y,W_i]$ is a nonincreasing process for all $i \in
\{1, \ldots, d\}$. Furthermore, let $Y^{*} := \{Y^{*}\left( t\right)
\}_{t\geq0}$ denote its maximum process. Moreover, suppose that $%
\mu=\{\mu\left( t\right) \}_{t\geq0}$ is a progressively measurable process
satisfying
\begin{align*} 
\left\Vert \mu(t) \right\Vert\leq c(t, Y^*(t))\left( 1+ W^{*}(t)\right)
\end{align*}
for all $t \geq 0$ and some function $c:[0,\infty) \times [0,\infty)
\rightarrow [0,\infty)$ that is nondecreasing in both arguments.

Then the local martingale $M=\{M\left( t\right) \}_{t\geq0}$, defined as
\begin{equation*}
M( t) :=\exp\left( \int_{0}^{t}\mu( s ) \mathrm{d} W(s) -\frac{1}{2}%
\int_{0}^{t}\left\Vert \mu( s) \right\Vert^{2}\mathrm{d}s\right) ,
\end{equation*}
is a martingale.
\end{theorem}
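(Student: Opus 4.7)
The strategy is to apply Corollary~\ref{cor stopping times} with the localization $\tau_n := \inf\{t \geq 0 : W^*(t) \vee Y^*(t) \geq n\}$. On $[0, \tau_n]$ the growth bound on $\mu$ yields $\|\mu(s)\| \leq c(t, n)(1 + n)$ for $s \leq t$, so the quadratic variation of $\int_0^{\cdot \wedge \tau_n} \mu \, dW$ is deterministically bounded on every finite horizon; Corollary~\ref{C bounded} then shows that $M_n := M^{\tau_n}$ is a uniformly integrable martingale. I define $Q_n$ via $dQ_n := M_n(\infty) dP$, so that by Corollary~\ref{cor stopping times} it is enough to verify $Q_n(\tau_n \leq t) \to 0$ as $n \to \infty$ for each $t > 0$. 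Under $Q_n$, Girsanov's theorem makes $\tilde{W}(s) := W(s) - \int_0^{s \wedge \tau_n} \mu(u) du$ a $d$-dimensional Brownian motion, and combining this with the growth hypothesis on $\mu$ and the monotonicity of $c$ produces the inequality $W^*(s) \leq \tilde{W}^*(t) + c(t, Y^*(t \wedge \tau_n)) \int_0^{s} (1 + W^*(u)) du$ for $s \leq t \wedge \tau_n$, $Q_n$-a.s. Gronwall's lemma then bounds $W^*(t \wedge \tau_n)$ by an explicit continuous function of $\tilde{W}^*(t)$ and $Y^*(t \wedge \tau_n)$; and $\{\tilde{W}^*(t)\}_{n \in \N}$ is automatically tight under $\{Q_n\}$ since $\tilde{W}$ is always a Brownian motion there.

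The principal obstacle is then to control $Y^*(t \wedge \tau_n)$ under $\{Q_n\}_{n \in \N}$, because the change of measure from $P$ to $Q_n$ typically destroys the $P$-supermartingale property of $Y$. This is precisely where the hypothesis that each $[Y, W_i]$ is a nonincreasing process is used: applying Girsanov's theorem to the continuous local-martingale part of the Doob-Meyer decomposition of $Y$ shows that, up to $\tau_n$, the drift correction picked up under $Q_n$ takes the form $\sum_i \int_0^{\cdot \wedge \tau_n} \mu_i \, d[Y, W_i]$, which combines with the nondecreasing predictable component to yield that $Y^{\tau_n}$ remains a nonnegative $Q_n$-supermartingale starting from $Y(0)$. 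Doob's maximal inequality for nonnegative supermartingales then gives $Q_n(Y^*(t \wedge \tau_n) \geq K) \leq Y(0)/K$, uniformly in $n$.

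Assembling the pieces via $Q_n(\tau_n \leq t) \leq Q_n(W^*(t \wedge \tau_n) \geq n) + Q_n(Y^*(t \wedge \tau_n) \geq n)$, both terms on the right vanish as $n \to \infty$ thanks to the tightness of $\{\tilde{W}^*(t)\}_n$, the tightness of $\{Y^*(t \wedge \tau_n)\}_n$, and the continuity of the Gronwall bound for $W^*(t \wedge \tau_n)$. This closes the argument via Corollary~\ref{cor stopping times}.
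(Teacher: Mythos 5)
Your approach is genuinely different from the paper's. The paper truncates the drift, setting $\mu_n := (\mu \wedge n)\vee(-n)$ componentwise, and then applies Theorem~\ref{T mainmain} directly with $\mathrm{d}Q_n = M_n(t)\,\mathrm{d}P$ at a fixed time $t$; you instead stop $M$ at hitting times of $W^*\vee Y^*$ and feed the stopped processes into Corollary~\ref{cor stopping times}. Both routes use the same key ingredients --- Corollary~\ref{C bounded} to certify the approximating martingales, Girsanov to produce a $Q_n$-Brownian motion, the $Q_n$-supermartingale property of $Y$ together with Doob's maximal inequality, and Gronwall's lemma to control $W^*$ under $Q_n$ --- so the two arguments are of essentially the same depth. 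The stopping-time formulation reduces the problem to the cleaner statement $Q_n(\tau_n\le t)\to 0$, at the cost of needing uniform integrability of each $M^{\tau_n}$; the paper's truncation avoids the UI issue entirely because its $Q_n$ is defined via $M_n(t)$ at a finite horizon.

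There is, however, a genuine gap in how you dispose of that UI requirement. You claim that Corollary~\ref{C bounded} shows $M_n := M^{\tau_n}$ is a \emph{uniformly integrable} martingale. Corollary~\ref{C bounded} only certifies the martingale property. With $\tau_n = \inf\{t : W^*(t)\vee Y^*(t)\ge n\}$, the stopped quadratic variation $\int_0^{t\wedge\tau_n}\|\mu(s)\|^2\,\mathrm{d}s$ is bounded by $(1+n)^2\int_0^t c(s,n)^2\,\mathrm{d}s$, which is finite for each $t$ but unbounded as $t\uparrow\infty$ (the hypothesis only gives $c$ nondecreasing). Since $\tau_n<\infty$ $P$-a.s., the total quadratic variation $\int_0^{\tau_n}\|\mu\|^2\,\mathrm{d}s$ need not be bounded, and UI of $M^{\tau_n}$ does not follow; as written, $\mathrm{d}Q_n = M_n(\infty)\,\mathrm{d}P$ might fail to be a probability measure, which undercuts the Girsanov and supermartingale steps downstream. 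The fix is small: replace $\tau_n$ by $\tau_n\wedge n$. Then $\|\mu\|\le c(n,n)(1+n)$ on $[0,\tau_n\wedge n]$, so the quadratic variation of the stopped integrand is globally bounded and $M^{\tau_n\wedge n}$ is uniformly integrable (e.g.\ by Novikov); $\tau_n\wedge n\to\infty$ $P$-a.s.\ still holds, and the rest of your argument (Gronwall, tightness of $\widetilde W^*(t)$ and $Y^*(t\wedge\tau_n)$, and the final estimate on $Q_n(\tau_n\wedge n\le t)$) carries over unchanged. One cosmetic point: in a supermartingale's Doob--Meyer decomposition $Y = Y(0)+N-A$, the predictable finite-variation part is \emph{nonincreasing} (i.e.\ $-A$ with $A$ nondecreasing), not nondecreasing as you write; the intended sign cancellation with $\sum_i\int\mu_i\,\mathrm{d}[Y,W_i]$ is what you actually use.
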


\begin{proof}
Let us define the sequence $\{\mu_n\}_{n \in \mathbb{N}}$ of progressively
measurable processes $\mu_{n}=\{\mu _{n}\left( t\right) \}_{t\geq0}$,
defined by $\mu_{n}(\cdot):=(\mu (\cdot)\wedge n)\vee(-n)$, where the
minimum and maximum are taken component by component. It follows easily, for
example by applying the definition of the stochastic integral, that the
sequence of local martingales $M_{n}=\{M_{n}\left( t\right) \}_{t\geq0}$,
defined as
\begin{equation*}
M_{n}( t) :=\exp\left( \int_{0}^{t}\mu_{n}( s) \mathrm{d}W( s) -\frac{1}{2}%
\int_{0}^{t}\left\Vert \mu_{n}( s ) \right\Vert^{2}\mathrm{d}s\right) ,
\end{equation*}
satisfies $(P,M_{n}\left( t\right) )\overset{\mathfrak{w}}{\Longrightarrow}%
(P,M\left( t\right) )$  $(n \uparrow \infty)$ for all $t\geq0$. By Corollary~\ref{C bounded}, the local martingale $M_{n}
$ is a true martingale since $\mu_{n}$ is bounded. Now, observe that
\begin{align*}
B_{n}( \cdot) :=W( \cdot) -\int_{0}^{\cdot}\mu _{n}(s) \mathrm{d} s
\end{align*}
is a Brownian motion and that $Y$ is still a nonnegative supermartingale under the probability measure $Q_{n}$, induced by $%
M_{n}( \cdot) $ via $\mathrm{d} Q_{n} = M_{n}(t) \mathrm{d} P$, and that
\begin{align*}
M_{n}( t) =\exp\left( \int_{0}^{t}\mu_{n}(s) \mathrm{d} B_{n}( s) +\frac{1}{2%
}\int_{0}^{t} \left\Vert \mu _{n}(s) \right\Vert^{2}\mathrm{d} s\right) .
\end{align*}
We first note that%
\begin{align*}
\left\Vert W( t) \right\Vert \leq\left\Vert B_{n}(t)
\right\Vert+\int_{0}^{t}c(s, Y^*(s)) \left( 1+ W^{*}(s)\right) \mathrm{d} s \leq
B_{n}^{*}( t) +c(t, Y^*(t)) t+c(t, Y^*(t)) \int_{0}^{t}W^{*}(s) \mathrm{d} s
\end{align*}
for all $r \leq t$, where $B_{n}^{*}=\{B_{n}^{*}\left( t\right) \}_{t\geq0}$
is defined similar to $W^{*}$. An application of Gronwall's inequality then
yields that $W^{*}(t)$ is tight along $\{Q_n\}_{n \in \mathbb{N}}$. This
guarantees that $\{\int_{0}^{t}\left\Vert \mu_{n}(s) \right\Vert^{2} \mathrm{%
d} s\}_{n \in \mathbb{N}}$ is tight as well. Lemma~\ref{L quadVar} then
yields the tightness of $\{\int_{0}^{t} \mu_{n}(s) \mathrm{d} B_n(s)\}_{n
\in \mathbb{N}}$. Thus, $\{M_n(t)\}_{n \in \mathbb{N}}$ is tight along $%
\{Q_n\}_{n \in \mathbb{N}}$ and $M$ is a true $P$-martingale by Theorem~\ref%
{T mainmain}.
\end{proof}

To recover the result by \citet{Benes_1971}, suppose that $\widetilde{\mu }%
:[0,\infty )\times \mathbb{R}^{d}\rightarrow \mathbb{R}$ is measurable and
satisfies%
\begin{equation*}
\left\Vert \widetilde{\mu }\left( t,x\right) \right\Vert \leq \widetilde{c}%
(t)\left( 1+\left\Vert x\right\Vert \right)
\end{equation*}%
for all $t\geq 0$, $x \in {\mathbb{R}}^d$, and some nondecreasing function $%
\widetilde{c}:[0,\infty )\rightarrow \lbrack 0,\infty )$. Then, for any $T>0$%
, with $\mu (t)=\widetilde{\mu }(t,W(t))$ in the last proposition, the above
computations show the weak existence of a solution to the stochastic
differential equation
\begin{equation*}
X(t)=-\int_{0}^{t}\widetilde{\mu }\left( s,X(s)\right) \mathrm{d}%
s+B(t),\qquad 0\leq t\leq T,
\end{equation*}%
where $B=\{B(t)\}_{t\geq 0}$ denotes a Brownian motion. For an alternative
proof of this statement, using Novikov's condition along with
\textquotedblleft salami tactics,\textquotedblright\ see Proposition~5.3.6
in \citet{KS1}.

The more general assertion of Theorem~\ref{P:Benes} cannot be proven via
this ``salami tactics.'' For example, if $Y$ is a nonnegative pure-jump
supermartingale, then the quadratic variation processes of $Y$ and the
components of $W$ are zero, even if the jump sizes of $Y$ depend, in a
nonanticipative way, on the paths of $W$.

\subsection{Compound Poisson processes}

We continue with an application of Theorem~\ref{T mainmain} to a class of
stochastic differential equations (SDEs) involving jumps. 
Towards this end, for any $\omega \in D_{[0,\infty )}({\mathbb{R}}^{d})$, we
shall write $\Delta \omega (t):=\omega (t)-\omega (t-)$. Further, for any $%
\omega \in D_{[0,\infty )}({\mathbb{R}}^{d})$, define $\omega ^{t}\in
D_{[0,t)}({\mathbb{R}}^{d})$ be equal to the projection of $\omega $ onto $%
D_{[0,t)}\left( {\mathbb{R}}^{d}\right) $; that is, $\omega ^{t}\left(
s\right) =\omega \left( s\right) $ for all $s\in \lbrack 0,t)$. We call a
function $g$ with domain $[0,\infty )\times D_{[0,\infty )}({\mathbb{R}}%
^{d}) $ \emph{predictable} if the function $g\left( \cdot ,\omega \right) $
is measurable for each $\omega \in D_{[0,\infty )}({\mathbb{R}})$, and we
have that $g(t,\omega )=g(t,\varpi )$ for all $t\geq 0$ and all $\omega
,\varpi \in D_{[0,\infty )}({\mathbb{R}}^{d})$ with $\omega ^{t}\equiv
\varpi ^{t}$

Let $F$ denote the distribution of an ${\mathbb{R}}^{d}\setminus \{0\}$%
--valued random variable for some $d\in \mathbb{N}$ and fix $x_{0}\in {%
\mathbb{R}}^{d}$ and a predictable function $g:[0,\infty )\times
D\rightarrow {\mathbb{[}}0,\infty )$. Define $\Psi_g(t,\omega
):=\int_{0}^{t}g(s,\omega )\mathrm{d}s$ for all $t\geq 0$ and $\omega \in
D_{[0,\infty )}({\mathbb{R}}^{d})$. We say that a filtered probability space
$(\Omega ,\mathcal{F},\mathbb{F},P)$ along with an adapted process $X$ with
cadlag paths in ${\mathbb{R}}^{d}$ is a weak solution to the SDE
\begin{align}  \label{E SDE Poisson}
X(t)=x_{0}+\sum_{j=1}^{N_g( t) }Z_{j}^{F},
\end{align}%
if $X(0)=x_{0}$, the jumps $\{\Delta X\mid \Delta X\neq 0\}$ of $X$ are
independent and identically distributed according to $F$, and $L_g\left(
\cdot \right) :=N_g\left( \cdot \right) -\Psi _g(\cdot ,X)$ is a $P$--local
martingale up to the first hitting time of infinity by $N_g$, where $%
N_g\left( \cdot \right) :=\sum_{s\leq \cdot }\mathbf{1}_{\{\Delta X(s)\neq
0\}}$ is the sum of jumps.  Theorem~3.6 in \citet{Jacod_1975} yields the existence and uniqueness of a weak solution to \eqref{E SDE Poisson};
however, such solution might be explosive in the sense that $N_g\left(
t\right) =\infty $ for some $t\in \left( 0,\infty \right) $ with positive
probability. Below, in Lemma~\ref{L nonexplosive}, we will provide
sufficient conditions to ensure a non-explosive solution.

Any non-explosive solution $(\Omega ,\mathcal{F},\mathbb{F},P),X$ of %
\eqref{E SDE Poisson} corresponds to a compound Poisson process with jumps
distributed according to $F$ such that its instantaneous intensity to jump
at time $t$ equals $g(t,X)$; more precisely
\begin{equation*}
\sum_{s\leq \cdot }\mathbf{1}_{\{\Delta X(s)\neq 0\}}=N\left( \Psi _g(\cdot
,X)\right)
\end{equation*}%
for some Poisson process $N=\{N\left( t\right) \}_{t\geq 0}$ with unit rate.

Such a non-explosive solution exists, for example, if $g(t,X)=\mathfrak{g}(t)
$ only depends on time and $\int_{0}^{t}\mathfrak{g}(s)\mathrm{d}s<\infty $
for all $t\geq 0$. The following lemma gives another existence result:

\begin{lemma}
\label{L nonexplosive} Fix $d\in \mathbb{N}$ and $x_{0}\in {\mathbb{R}}^{d}$
and let $F$ denote the distribution of an ${\mathbb{R}}^{d}\setminus \{0\}$%
--valued random variable whose components have finite expected value. Let $%
\mathfrak{g}:{\mathbb{R}}^{d}\rightarrow {\mathbb{R}}$ be measurable, such
that there exists $c>0$ with $\mathfrak{g}(y)\leq c(1+\Vert y\Vert )$ for
all $y\in {\mathbb{R}}^{d}$. Then there exists a non-explosive weak solution
to \eqref{E SDE
Poisson} with $g(t,\omega )=\mathfrak{g}(\omega (t-))$ for all $(t,\omega
)\in \lbrack 0,\infty )\times D$.
\end{lemma}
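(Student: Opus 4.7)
The plan is to start from the (possibly explosive) weak solution provided by Theorem~3.6 of \citet{Jacod_1975}, and then to rule out explosion via a Gronwall-type moment estimate. Concretely, I would obtain a filtered probability space $(\Omega,\mathcal{F},\mathbb{F},P)$ and an adapted cadlag process $X$ satisfying \eqref{E SDE Poisson} on $[0,\tau_{\infty})$, where $\tau_{k}$ denotes the $k$-th jump time and $\tau_{\infty}:=\lim_{k\uparrow\infty}\tau_{k}$ is the potential explosion time. Since non-explosion is equivalent to $E[N_g(t)]<\infty$ for every $t>0$, this becomes the target.

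The main step is a uniform bound on $u_{k}(t):=E[N_g(t\wedge\tau_{k})]$. Because $N_g(\cdot\wedge\tau_{k})\leq k$ is bounded, the stopped local martingale $L_g(\cdot\wedge\tau_{k})$ is a true martingale, so the linear-growth assumption on $\mathfrak{g}$ yields
\[
u_{k}(t) \;=\; E\!\left[\int_{0}^{t\wedge\tau_{k}}\mathfrak{g}(X(s-))\,\mathrm{d}s\right]
\;\leq\; c\int_{0}^{t}\bigl(1+E[\|X(s-)\|\,\mathbf{1}_{\{s\leq\tau_{k}\}}]\bigr)\,\mathrm{d}s.
\]
I would then control the right-hand side through the pointwise estimate $\|X(s)\|\leq\|x_{0}\|+\sum_{j=1}^{N_g(s)}\|Z_{j}^{F}\|$, applying the compensation formula for the integer-valued random measure of $X$ to obtain
\[
E[\|X(s\wedge\tau_{k})\|] \;\leq\; \|x_{0}\|+E[\|Z^{F}\|]\,u_{k}(s),
\]
with $E[\|Z^{F}\|]<\infty$ by componentwise integrability of $Z^{F}$ and equivalence of norms on ${\mathbb{R}}^{d}$. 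Substituting back produces an integral inequality of the form $u_{k}(t)\leq\alpha(t)+\beta\int_{0}^{t}u_{k}(s)\,\mathrm{d}s$ with $\alpha,\beta$ independent of $k$, and Gronwall's lemma gives $u_{k}(t)\leq K_{t}$ uniformly in $k$. Monotone convergence as $k\uparrow\infty$ then forces $E[N_g(t)]\leq K_{t}<\infty$, so $\tau_{\infty}=\infty$ almost surely.

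The main obstacle I anticipate is justifying the ``Wald-style'' bound on the cumulative jump magnitude. Although the marks $\{Z_{j}^{F}\}$ are i.i.d.~with law $F$ and, in Jacod's construction, independent of the jump times, the jump times themselves are coupled to the past of $X$ through the intensity $\mathfrak{g}(X(s-))$. One therefore cannot appeal to Wald's identity for i.i.d.~sums stopped at a random time directly; instead, the estimate must be obtained by integrating $\|z\|$ against the predictable compensator $\mathfrak{g}(X(s-))\,F(\mathrm{d}z)\,\mathrm{d}s$ of the jump measure of $X$ up to the bounded stopping time $\tau_{k}$, and then invoking the same martingale property used above.
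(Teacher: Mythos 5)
Your argument is correct, but it is a genuinely different proof from the one in the paper. The paper proceeds \emph{constructively}: it starts from a unit-rate compound Poisson process $J$, sets $\Gamma(t)=\int_0^t \mathfrak{g}(J(s))^{-1}\,\mathrm{d}s$, shows via the strong law of large numbers (which gives $\|J(t)\|\leq K(\omega)(1+t)$) that $\Gamma$ increases to infinity, and then verifies directly that $X:=J\circ\Gamma^{-1}$ is a non-explosive solution because $\Gamma^{-1}(t)=\Psi_g(t,X)$. No existence theorem for the SDE is invoked; non-explosion is manifest from $\Gamma^{-1}(t)<\infty$. You instead take the (possibly explosive) solution from \citet{Jacod_1975} as given and \emph{rule out} explosion through a Gronwall estimate on $u_k(t)=E[N_g(t\wedge\tau_k)]$, using the martingale property of $L_g^{\tau_k}$ and the compensation formula to close the integral inequality. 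Both routes are sound. The paper's time-change argument is shorter, produces the solution explicitly, and avoids any moment computation; the price is that it relies on the specific Markovian form $g(t,\omega)=\mathfrak{g}(\omega(t-))$ to turn a state-dependent intensity into a time change of a standard compound Poisson process. Your Gronwall argument is a more standard and more portable template (it would adapt to path-dependent $g$ with the same linear-growth bound on the supremum of the path), at the cost of invoking Jacod's existence result and some care with the compensation formula and with the integrability of the stopped local martingale.

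Two small points to tighten. First, you assert that non-explosion is ``equivalent'' to $E[N_g(t)]<\infty$ for all $t$; only the implication you actually use (finite mean implies non-explosion) is true, the converse can fail. Second, the claim that $L_g(\cdot\wedge\tau_k)$ is a true martingale ``because $N_g(\cdot\wedge\tau_k)\leq k$'' deserves one line: since $-L_g^{\tau_k}+k$ is a nonnegative local martingale and hence a supermartingale, $E[\Psi_g^{\tau_k}(t)]\leq k<\infty$, so $|L_g^{\tau_k\wedge\sigma_m}(t)|\leq k+\Psi_g^{\tau_k}(t)$ gives a dominating integrable bound along any localizing sequence and the martingale property passes to the limit. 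With these spelled out the proof is complete.
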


\begin{proof}
Let $N=\{N\left( t\right) \}_{t\geq 0}$ denote a Poisson process with unit
rate and $Z=\{Z_{j}^{F}\}_{j\geq 1}$ a sequence of independent $F$%
--distributed random variables independent of $N$. First observe that
\begin{equation*}
J\left( t\right) :=x_{0}+\sum_{j=1}^{N\left( t\right) }Z_{j}^{F}
\end{equation*}%
always exists and that
\begin{equation*}
\Gamma (t):=\int_{0}^{t}\frac{1}{\mathfrak{g}(J(s))}\mathrm{d}s\geq \frac{1}{c}%
\int_{0}^{t}\frac{1}{1+\Vert J(s)\Vert }\mathrm{d}s
\end{equation*}%
is strictly increasing (before hitting infinity) and satisfies $\lim_{t \uparrow \infty} \Gamma(t) = \infty$ since there exists $K\left(
\omega \right) \in \left( 0,\infty \right) $ such that $\left\Vert
J(t)\right\Vert \leq K\left( \omega \right) \left( 1+t\right) $ by the law
of large numbers. Thus, $\Gamma $ yields a valid time change. Now, consider the non-explosive process $%
X(t)=J(\Gamma ^{-1}(t))$ and observe that $\dot{\Gamma}^{-1}\left( s\right)
=1/\dot{\Gamma}(\Gamma ^{-1}(s))$, which implies
\begin{equation*}
\Gamma ^{-1}(t)=\int_{0}^{t}\dot{\Gamma}^{-1}\left( s\right) ds=\int_{0}^{t}%
\mathfrak{g}(J(\Gamma ^{-1}(s)))\mathrm{d}s=\int_{0}^{t}\mathfrak{g}(X(s))%
\mathrm{d}s=\psi_g(t,X),
\end{equation*}%
which in turn verifies that $X$ is a non-explosive solution to \eqref{E SDE Poisson}.
\end{proof}

The next theorem provides a sufficient condition that guarantees that the
intensity in Poisson processes can be changed without changing the nullsets
of the underlying probability measure. For example, any compound Poisson
process with a strictly positive intensity can be changed, via an equivalent
change of measure, to a compound Poisson process with unit intensity (set $%
g_{2} \equiv1$ below).

\begin{theorem}
\label{T compound} Fix $d\in \mathbb{N}$ and $x_{0}\in {\mathbb{R}}^{d}$ and
let $F$ denote the distribution of an ${\mathbb{R}}^{d}\setminus \{0\}$%
--valued random variable. Moreover, let $g_{1},g_{2}:[0,\infty )\times
D_{[0,\infty )}({\mathbb{R}}^{d})\rightarrow (0,\infty )$ be strictly
positive, predictable functions and denote the corresponding weak solutions
of \eqref{E SDE Poisson} with $g\equiv g_{1}$ and $g\equiv g_{2}$ by $X_{1}$%
, and $X_{2}$. Assume that $X_{2}$ is non-explosive.

Then, the process $M=\{M\left( t\right) \}_{t\geq 0}$, defined by
\begin{equation*}
M(t):=\exp \left( \int_{0}^{t}(\log g_{2}(s,X_1 )-\log g_{1}(s,X_1 ))\mathrm{%
d}L_{g_{1}}\left( s\right) -\int_{0}^{t}(g_{2}(s,X_1 )-g_{1}(s,X_1 ))\mathrm{%
d}s\right) ,
\end{equation*}%
is a true martingale; furthermore, under $Q$, defined on $\mathcal{F}(t)$ by
$\mathrm{d}Q|_{\mathcal{F}(t)}=M(t)\mathrm{d}P|_{\mathcal{F}(t)}$, the
distribution of $X_{1}$ equals the distribution of $X_{2}$.
\end{theorem}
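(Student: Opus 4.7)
The plan is to apply Corollary~\ref{cor stopping times}, exploiting the fact that the density $M$ is tailored to convert the $g_1$-intensity dynamics of $X_1$ into $g_2$-dynamics. The assumed non-explosion of $X_2$ will then supply exactly the tightness one needs to promote $M$ from a nonnegative local martingale to a genuine martingale.

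First I would build a localizing sequence $\tau_n$ such that $M^{\tau_n}$ is a uniformly integrable martingale and $M^{\tau_n}(t) \to M(t)$ $P$-almost surely. A natural choice is the minimum of the $n$-th jump time of $N_{g_1}$, the first time $\int_0^\cdot (g_1 + g_2)(s, X_1)\, \mathrm{d}s$ reaches level $n$, the first time $|\log(g_2/g_1)(\cdot, X_1)|$ reaches $n$, and the deterministic time $n$. On $[0, \tau_n]$ the jump ratio $g_2/g_1$ is bounded away from both $0$ and $\infty$, the absolutely continuous part of the exponent is bounded, and the total number of jumps is at most $n$, so $M^{\tau_n}$ is bounded and hence a uniformly integrable martingale.

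Next I would identify the probability measure $Q_n := M^{\tau_n}(\infty) \cdot P$. By Girsanov's theorem for random measures (e.g.\ Theorem~III.3.17 in \citet{JacodS}) applied to $M^{\tau_n}$, under $Q_n$ the predictable compensator of $N_{g_1}$ on $[0, \tau_n]$ becomes $\int_0^\cdot g_2(s, X_1)\, \mathrm{d}s$, while the jump distribution $F$ is preserved. Thus the stopped process $X_1^{\tau_n}$ is, under $Q_n$, a weak solution to \eqref{E SDE Poisson} with $g \equiv g_2$ up to time $\tau_n$. By weak uniqueness (Theorem~3.6 in \citet{Jacod_1975}), the $Q_n$-law of $X_1^{\tau_n}$ coincides with the $P$-law of $X_2$ stopped at the analogously defined stopping time $\tilde{\tau}_n$. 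Since $X_2$ is non-explosive by hypothesis, $\tilde{\tau}_n \uparrow \infty$ in $P$-probability, which yields $Q_n(\tau_n \leq t) \to 0$ for every $t > 0$. Corollary~\ref{cor stopping times} then gives that $M$ is a true martingale. For the second assertion, pass to the limit $n \to \infty$ in the compensator identification: since $\mathrm{d}Q|_{\mathcal{F}(t)} = M(t)\, \mathrm{d}P|_{\mathcal{F}(t)}$ for every $t$, under $Q$ the compensator of $N_{g_1}$ is $\int_0^\cdot g_2(s, X_1)\, \mathrm{d}s$ and the jumps remain $F$-distributed, so $X_1$ under $Q$ is itself a weak solution of \eqref{E SDE Poisson} with $g \equiv g_2$, and weak uniqueness yields equality in law with $X_2$.

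The main obstacle is constructing the localization in the first step. Without any a priori regularity on $g_1$ and $g_2$ beyond strict positivity and predictability, one must combine several stopping conditions to ensure $M^{\tau_n}$ is bounded; the paper's framework is forgiving here, since what is ultimately needed is only that the stopped density be a uniformly integrable martingale, not a bounded one. A secondary subtlety is that $X_1$ itself may be explosive under $P$ (non-explosion being assumed only for $X_2$); this is however exactly what the tightness approach via Corollary~\ref{cor stopping times} handles cleanly, since one only needs $\tau_n \to \infty$ in $Q_n$-probability, and the Girsanov identification reduces this precisely to the given non-explosion of $X_2$.
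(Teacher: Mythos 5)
Your overall strategy matches the paper's: localize $M$, use Girsanov together with weak uniqueness from \citet{Jacod_1975} to identify the $Q_n$-law of the stopped $X_1$ with the $P$-law of a correspondingly stopped $X_2$, and then convert the non-explosion of $X_2$ into the tightness statement required by the weak-convergence criterion. The only inessential structural difference is that you route through Corollary~\ref{cor stopping times} (proving $Q_n(\tau_n\le t)\to 0$), whereas the paper applies Theorem~\ref{T mainmain} directly and verifies tightness of $\{M_n(t)\}_{n}$ under $\{Q_n\}_n$ term by term. Since the corollary is a consequence of the theorem, this is not a substantive divergence.

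There is, however, a genuine gap in your localizing sequence. Corollary~\ref{cor stopping times} requires \emph{two} things of $\{\tau_n\}$: that each $M^{\tau_n}$ be a uniformly integrable martingale, and that $M^{\tau_n}(t)\to M(t)$ $P$-a.s.\ for all $t>0$. You focus entirely on the first. But your $\tau_n$ (the minimum of the $n$-th jump time, a level-$n$ cap on $\int_0^\cdot (g_1+g_2)\,\dd s$, a level-$n$ cap on $|\log(g_2/g_1)|$, and $n$) can converge as $n\uparrow\infty$ to a finite limit $\tau_\infty$ at which $M(\tau_\infty-)>0$; for instance this happens if $\sup_{s<\cdot}|\log(g_2/g_1)(s,X_1)|$ blows up in finite time before either $X_1$ explodes or $M$ reaches zero. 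In that case $M^{\tau_n}(t)\to M(\tau_\infty-)$, which is \emph{not} equal to $M(t)$ for $t>\tau_\infty$, and the corollary simply does not apply. Your remark that the tightness approach ``handles cleanly'' the possible explosion of $X_1$ is misdirected: explosion of $X_1$ affects the $P$-a.s.\ convergence, not just the $Q_n$-probability of $\{\tau_n\le t\}$. The paper's choice $\tau_n=\inf\{t:M(t)\ge n\text{ or }M(t)\le 1/n\}$ resolves exactly this: because the jump ratio $g_2/g_1$ is strictly positive, $M$ can only hit zero continuously, so as $n\uparrow\infty$ either $\tau_n\to\infty$ or $\tau_n$ increases to the first zero of $M$, and in both cases $M^{\tau_n}(t)\to M(t)$ $P$-a.s.; this is precisely what the authors flag when they remark that ``$M$ will hit zero at the time of the explosion.'' To repair your proof, you should either adopt the paper's localizing sequence (and then verify $M^{\tau_n}$ is a martingale via a bound on $E[\sup_{s\le t}M^{\tau_n}(s)]$, not via boundedness), or explicitly prove that your $\tau_n$ can only converge to a finite time on the event where $M$ vanishes in the limit.
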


\begin{proof}
Theorem~VI.2 in \citet{Bremaud_1981} yields that $M$ is a local martingale.
If $M$ is a true martingale, then Theorem~VI.3 in \citet{Bremaud_1981}
yields the assertion on the distribution of $X_{1}$ under the probability
measure $Q$. Define the approximating sequence $\{\tau _{n}\}_{n\in \mathbb{N%
}}$ of stopping times via
\begin{equation*}
\tau _{n}=\inf \{t\geq 0:M\left( t\right) \geq n\text{ or }M\left( t\right)
\leq 1/n\}
\end{equation*}%
and note that we can write those stopping times as functions of the jump process $X_1$; to wit, $\tau_n = \tau_n(X_1)$.
We have included the lower bound to deal with the case in which $X_{1}$ is
explosive; in such case, $M$ will hit zero at the time of the explosion.
Note that such explosion, if it ever occurs, cannot occur at the time of a
jump; thus the local martingale $M_{n}=M^{\tau _{n}}$ is strictly positive.

Next, fix $t>0$. By Theorem~\ref{T mainmain}
it is now sufficient to show that $\{M_{n}(t)\}_{n\in \mathbb{N}}$ is tight
along the sequence of probability measures $\{Q_{n}\}_{n\in \mathbb{N}}$,
defined via $\mathrm{d}Q_{n}=M_{n}(t)\mathrm{d}P$ to obtain the martingale property
of $M$. For $i=1,2$, we shall see that
\begin{equation*}
\left\{ \int_{0}^{\tau _{n}\wedge t}\left\vert \log g_{i}(s,\omega
)\right\vert \mathrm{d}L_{g_{1}}\left( s\right) \right\} _{n\in \mathbb{N}}%
\text{ and }\left\{ \int_{0}^{\tau _{n}\wedge t}g_{i}(s,\omega )\mathrm{d}%
s\right\} _{n\in \mathbb{N}}
\end{equation*}%
are tight along $\{Q_{n}\}_{n\in \mathbb{N}}$, which then proves the
statement. Towards this end, Theorem~VI.3 in \citet{Bremaud_1981} again
yields that under $Q_{n}$ the process $X_{1}( \cdot \wedge \tau _{n}) $
solves the martingale problem induced by \eqref{E SDE
Poisson} with $g( s,\omega ) =g_{2}( s,\omega )\1_{\{ \tau _{n}( \omega ) >s\}} $. On the other hand, it is
immediate that $X_{2}\left( \cdot \wedge \tau _{n}\right) $ also satisfies
\eqref{E SDE
Poisson} with $g( s,\omega ) =g_{2}( s,\omega )
\1_{\{ \tau _{n}( \omega ) >s\}} $. By the uniqueness of
solutions implied by Theorem~3.6 in \citet{Jacod_1975} we have that up to
the stopping time $\tau _{n}$, the $Q_{n}$-dynamics of $X_{1}$ coincide with
the dynamics of $X_{2}$. Thus, it is sufficient to observe that
\begin{align*}
Q_{n}\left( \int_{0}^{\tau _{n}\wedge t}\left\vert \log
g_{i}(s,X_{1})\right\vert \mathrm{d}L_{g_{1}}\left( s\right) >\kappa \right)
& =P\left( \int_{0}^{\tau _{n}\wedge t}\left\vert \log
g_{i}(s,X_{2})\right\vert \mathrm{d}L_{g_{2}}\left( s\right) >\kappa \right)
\\
& \leq P\left( \int_{0}^{t}\left\vert \log g_{i}(s,X_{2})\right\vert \mathrm{%
d}L_{g_{2}}\left( s\right) >\kappa \right)
\end{align*}%
for all $\kappa >0$, where the right-hand side does not depend on $n$ and
tends to zero as $\kappa $ increases (because $X_{2}$ is assumed to be
non-explosive). The same observations hold for the other terms of the local martingale $M$.
\end{proof}

\subsection{Counting processes}

In this application of Theorem~\ref{T mainmain}, we generalize a result by %
\citet{Giesecke_2013} concerning the martingale property of a local
martingale involving a counting process.

\begin{theorem}
\label{T counting} Let $L=\{L(t)\}_{t\geq 0}$ denote a non-explosive
counting process with compensator $A=\{A(t)\}_{t\geq 0}$ and assume that $A$
is continuous, that is, the jumps of $L$ are totally inaccessible. Fix a
measurable, deterministic function $u:[0,\infty )\rightarrow \lbrack -c,c]$
for some $c>0$. Then the process $M=\{M(t)\}_{t\geq 0}$, given by
\begin{equation*}
M(t):=\exp \left( -\int_{0}^{t}u(s)\mathrm{d}L(s)-\int_{0}^{t}\left( \exp
(-u(s))-1\right) \mathrm{d}A(s)\right) 
\end{equation*}%
for all $t\geq 0$, is a martingale.
\end{theorem}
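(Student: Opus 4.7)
The plan is to verify the martingale property by applying Corollary~\ref{cor stopping times}. First, observe that $M$ is a nonnegative local martingale with $M(0) = 1$: by It\^o's formula, $M = \mathcal{E}(N)$ where $N(t) := \int_0^t (e^{-u(s)} - 1)\,\mathrm{d}(L(s) - A(s))$ is itself a local martingale, since $L - A$ is one and $e^{-u} - 1$ is a bounded predictable integrand.

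Next I would introduce the localizing sequence $\tau_n := \inf\{t \ge 0 : L(t) + A(t) \ge n\}$. Because $L$ is non-explosive and the continuous compensator $A$ of a non-explosive counting process is $P$-a.s.\ finite at each finite time, $\tau_n \uparrow \infty$ $P$-a.s. Since $L$ has unit jumps and $A$ is continuous, $L + A \le n + 1$ on $[0, \tau_n]$; together with $|u| \le c$ this gives the pathwise bound $|\log M(t \wedge \tau_n)| \le c(n+1) + (e^c - 1)n$. Hence $M_n := M^{\tau_n}$ is a bounded (thus uniformly integrable) martingale, and I set $\mathrm{d}Q_n := M(\tau_n)\,\mathrm{d}P$. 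Girsanov's theorem for point processes (Theorem~VI.2 in \citet{Bremaud_1981}) then shows that under $Q_n$ the compensator of $L^{\tau_n}$ equals $\widetilde{A}_n(t) := \int_0^{t \wedge \tau_n} e^{-u(s)}\,\mathrm{d}A(s)$, satisfying $e^{-c} A^{\tau_n} \le \widetilde{A}_n \le e^c A^{\tau_n}$.

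By Corollary~\ref{cor stopping times}, it remains to show $Q_n(\tau_n \le t) \to 0$ as $n \uparrow \infty$ for each $t > 0$. Following the strategy used in the proof of Theorem~\ref{T compound}, I would compare the $Q_n$--law of $(L^{\tau_n}, A^{\tau_n})$ with the $P$--law of an independently constructed non-explosive counting process $\widehat{L}$ whose compensator equals $\int e^{-u(s)}\,\mathrm{d}\widehat{A}(s)$, where $\widehat{A}$ has the same $P$-law as $A$. Uniqueness in law for the associated martingale problem (Theorem~3.6 in \citet{Jacod_1975}) identifies these two distributions up to the stopping time, and since $u$ is bounded, $\widehat{L}$ is non-explosive, giving $Q_n(\tau_n \le t) \le P(\widehat{\tau}_n \le t) \to 0$ where $\widehat{\tau}_n$ is the analogous exit time for $\widehat{L}$.

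The main obstacle is this last comparison step: one must carefully set up the reference process $\widehat{L}$ so that uniqueness in law applies, and then argue that non-explosiveness of $L$ under $P$ transfers to non-explosiveness of $\widehat{L}$. The structural feature that makes the argument go through is that $u$ is both deterministic and bounded, so the measure change only perturbs the intensity by a factor in $[e^{-c}, e^c]$; a direct Markov-inequality approach on $E_{Q_n}[L(t \wedge \tau_n) + A(t \wedge \tau_n)]$ fails (the available bounds are only $O(n)$), and it is the uniqueness-in-law identification with a reference process that converts the boundedness of $u$ into the desired tightness uniformly in $n$.
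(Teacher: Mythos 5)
Your plan is structurally different from the paper's: you localize with $\tau_n := \inf\{t \geq 0 : L(t)+A(t)\geq n\}$, which makes $M^{\tau_n}$ bounded, and then invoke Corollary~\ref{cor stopping times}; the paper instead takes an arbitrary localizing sequence and verifies the tightness condition of Theorem~\ref{T mainmain} directly. That first part is fine, and your observation that a crude Markov bound on $E_{Q_n}[L^{\tau_n}(t)+A^{\tau_n}(t)]$ only yields $O(n)$ is correct. The gap is in the comparison step. You propose to compare the $Q_n$--law of $(L^{\tau_n},A^{\tau_n})$ with the $P$--law of an \emph{independently} constructed $\widehat{L}$ whose compensator is $\int e^{-u}\,\mathrm{d}\widehat{A}$, where $\widehat{A}$ merely has the same $P$--law as $A$, and then invoke uniqueness in law. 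This does not work as stated: the compensator $A$ is a predictable process adapted to $\{\mathcal{F}(t)\}$, which may be strictly larger than the filtration generated by $L$, so $(L,A)$ need not solve a well-posed martingale problem; the requirement ``$\widehat{A}$ has the same law as $A$'' does not pin down a joint law for $(\widehat{L},\widehat{A})$; and Theorem~3.6 of \citet{Jacod_1975} concerns counting processes whose compensator is a functional of their own path, which $A$ is not assumed to be. Moreover, even with a dominating process for $L$ in hand, the event $\{\tau_n\leq t\}$ depends on $A^{\tau_n}(t)$ as well, and nothing in your comparison controls that quantity under $Q_n$.

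The paper avoids both problems by working on an extension of the \emph{same} probability space. It builds a single dominating counting process $\widehat{L}$ with compensator $\widehat{c}A$ (the \emph{same} $A$, with $\widehat{c}=\lceil e^c\rceil$) by superposing $\widehat{c}$ independent copies of $L$, and obtains $L^u$ (compensator $\int e^{-u}\,\mathrm{d}A$) by thinning $\widehat{L}$, giving the pathwise bound $L^u\leq\widehat{L}$; this yields tightness of $L^{\tau_n}(t)$ under $\{Q_n\}$. The $A$--part is then handled by a purely martingale-theoretic estimate: under $Q_n$, $N:=A^{u,\tau_n}-L^{\tau_n}$ is a local martingale bounded below by $-[\sqrt{\kappa_c}]$ up to the hitting time $\rho$ of $[\sqrt{\kappa_c}]$ by $L$, so $N^{\rho}+[\sqrt{\kappa_c}]+1$ is a nonnegative $Q_n$--supermartingale and Markov's inequality gives tightness of $A^{\tau_n}(t)$ from that of $L^{\tau_n}(t)$. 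If you want to keep your $\tau_n$ and Corollary~\ref{cor stopping times}, you would still need both ingredients --- the same-space domination for $L$ and the supermartingale estimate for $A$ --- so the proposal as written has a genuine missing piece rather than a complete alternative proof.
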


Before we provide the proof of this result we note that Theorem~\ref{T
counting} generalizes Proposition~3.1 in \citet{Giesecke_2013} in two ways.
First, it does not assume that the function $u$ is constant. Second, no
integrability assumption on $A$ is made, such as $E[\exp (A_{t})]<\infty $
for some $t>0$. However, for sake of simplicity, we only considered the
one-dimensional setup with unit jumps.

\begin{proof}[Proof of Theorem~\ref{T counting}]
First, observe that there exists a counting process $\widehat{L}$, possible
on an extension of the probability space, with compensator $\widehat{c}A$,
where $\widehat{c}$ is the smallest integer greater than or equal to $\exp
(c)$. For example, the process $\widehat{L}$ can by constructed by adding $%
\widehat{c}$ independent versions of $L$, exploiting the fact that the jumps
of $L$ are totally inaccessible. A standard thinning argument implies that
there also exists a counting process $L^{u}$ with compensator $%
A^{u}:=\int_{0}^{t}\exp (-u(s))\mathrm{d}A(s)$. Moreover, by %
\citet{Jacod_1975} and by using the minimal filtration, if two counting
processes $L^{u}$ and $\widehat{L}^{u}$ have the same compensator then they
follow the same probability law.

Simple computations yield that $M$ is a local martingale. Let $\{\tau _{n}\}$
denote a localization sequence, set $M_{n}=M^{\tau _{n}}$, fix $t>0$, and
define the probability measures $Q_{n}$ by $\mathrm{d}Q_{n}=M_{n}(t)\mathrm{d%
}P$. It is sufficient to prove that $\{M_{n}(t)\}_{n\in \mathbb{N}}$ is
tight along the sequence $\{Q_{n}\}_{n\in \mathbb{N}}$. First, observe that
\begin{equation*}
Q_{n}\left( \exp \left( -\int_{0}^{\tau _{n}}u(s)\mathrm{d}L(s)\right) \geq
\kappa \right) \leq Q_{n}\left( \exp \left( cL(\tau _{n})\right) \geq \kappa
\right) \leq P\left( \exp (c\widehat{L}(t))>\kappa \right)
\end{equation*}%
for all $n\in \mathbb{N}$ and $\kappa >0$. For later use, observe that we
also have the tightness of $\{L^{\tau _{n}}(t)\}_{n\in \mathbb{N}}$ along
the sequence $\{Q_{n}\}_{n\in \mathbb{N}}$. Tightness of $\{M_{n}(t)\}_{n\in
\mathbb{N}}$ now follows as soon as we have shown the tightness of $%
\{A^{\tau _{n}}(t)\}_{n\in \mathbb{N}}$, that is, $\sup_{n\in \mathbb{N}%
}Q_{n}(A^{\tau _{n}}(t)\geq \kappa )\rightarrow 0$ as $\kappa \uparrow
\infty $. However, we can write
\begin{equation*}
A^{\tau _{n}}(t)=\int_{0}^{t\wedge \tau _{n}}\exp (u(s\wedge \tau _{n}))%
\mathrm{d}A^{u}(s)\leq \exp (c)A^{u,\tau _{n}}(t)
\end{equation*}%
and thus, with $\kappa _{c}:=\exp (-c)\kappa $ and $N=A^{u,\tau
_{n}}-L^{\tau _{n}}$,
\begin{align*}
Q_{n}\left( A^{\tau _{n}}(t)\geq \kappa \right) & \leq Q_{n}\left( A^{u,\tau
_{n}}(t)\geq \kappa _{c}\right) =Q_{n}\left( N(t)+L^{\tau _{n}}(t)\geq
\kappa _{c}\right)  \\
& \leq Q_{n}\left( L^{\tau _{n}}(t)\geq \lbrack \sqrt{\kappa _{c}}]\right)
+Q_{n}\left( N(t\wedge \rho )+[\sqrt{\kappa _{c}}]+1\geq \kappa _{c}+1\right) ,
\end{align*}%
where $\rho $ is the first hitting time of $[\sqrt{\kappa _{c}}]$ by $L$.
The tightness of $\{L^{\tau _{n}}(t)\}_{n\in \mathbb{N}}$ and Markov's
inequality applied to the nonnegative $Q_{n}$--supermartingale $N^{\rho }+[%
\sqrt{\kappa _{c}}]+1$ then yield the tightness of $\{A^{\tau
_{n}}(t)\}_{n\in \mathbb{N}}$ and Theorem~\ref{T mainmain} yields the
statement.
\end{proof}

\subsection{Ornstein-Uhlenbeck process given rare first passage time events}

In this application, we are given an Ornstein-Uhlenbeck process $X$, started at $X(0)=1$
and mean-reverting to the origin. We are interested in finding a representation for conditional expectations that can be used to design simulation estimators involving the rare event that $X$ hits a large level $N\in \mathbb{N}$ before hitting $0$. Such questions arise in studying
overflow probabilities within operational cycles engineering systems, such as queueing networks. 
Ornstein-Uhlenbeck processes arise in such a setup as an
approximative description of a system with infinitely many servers in heavy traffic, see Chapter~6 of \citet{Robert2003stochastic}.

We achieve such a representation by relating the Ornstein-Uhlenbeck process to the
time-reversal of a three-dimensional Bessel process. Although the
probability of the conditioning event that $X$ hits $N$ before $0$ decreases
\emph{exponentially} in the threshold parameter $N$, as we note in Remark 1 below, the representation provided here
can be used to design estimators that run in \emph{linear} time as a function of $N$. To obtain this representation, we
use a result of \citet{Blanchet2013} for irreducible and positive recurrent
discrete-time Markov chains. We then approximate $X$ by a sequence of such Markov chains,
apply the result of \citet{Blanchet2013}, and then use the second part of
Theorem~\ref{T mainmain} to conclude.

We first recall Proposition~1 in \citet{Blanchet2013}:

\begin{proposition}
\label{Conv} Let $X=\{X_{k}\}_{k\in \mathbb{N}_{0}}$ denote an irreducible
and positive recurrent discrete time Markov chain taking values in some
countable state space $\mathcal{S}$ and having stationary distribution $\pi $%
. Let $X^{\prime }=\{X_{k}^{\prime }\}_{k\in \mathbb{N}_{0}}$ denote the
time-reversal of $X$. For each $x\in \mathcal{S}$, let $P_{x}$ denote the
probability measure in the path space associated with $X$, conditioned on
the event $\{X_{0}=x\}$. Let $P_{\pi }^{\prime }$ denote the probability
measure associated with $X^{\prime }$ when started in the stationary
distribution. Fix a function $V:\mathcal{S}\longrightarrow \mathbb{R}$, $%
N\in {\mathbb{R}}$, and $x,b\in \mathcal{S}$ with $V(x)<N$ and $V\left(
b\right) <N$. For any $y\in \mathcal{S}$ define $T_{y}=\inf \{k\in \mathbb{N}%
:X_{k}=y\}$ and, similarly, $T_{y}^{\prime }$. Write $T_{\ast }=\inf \{k\in
\mathbb{N}:V(X_{k})\geq N\}$ and define $T_{\ast }^{\prime }$ similarly.

Then
\begin{equation*}
P_{x}\left( \left. \left( X_{0},\ldots ,X_{T_{\ast }}\right) \in \cdot
\right\vert \{T_{\ast }<T_{b}\}\right) =P_{\pi }^{\prime }\left( \left.
\left( X_{\xi ^{\prime }(x)}^{\prime },\ldots ,X_{0}^{\prime }\right) \in
\cdot \right\vert \{V\left( X_{0}^{\prime }\right) \geq N\}\cap
\{T_{x}^{\prime }\leq T_{b}^{\prime }<T_*'\}\right) ,
\end{equation*}%
where $\xi ^{\prime }\left( x\right) =\max \{0\leq k\leq T_{b}^{\prime
}:X_{k}^{\prime }=x\}$.
\end{proposition}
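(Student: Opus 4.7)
The plan is to evaluate both sides of the identity on a single admissible sample path and verify equality via a direct reversibility computation. Write $p$ and $p'$ for the transition kernels of $X$ and $X'$, respectively, which satisfy the detailed-balance-type relation $\pi(a)p'(a,c) = \pi(c)p(c,a)$. Fix an admissible path $(x_0, x_1, \ldots, x_n)$ with $x_0 = x$, with $V(x_k) < N$ and $x_k \neq b$ for $k < n$, and with $V(x_n) \geq N$. By the strong Markov property, the left-hand side evaluates to
\begin{align*}
\frac{1}{P_x(T_* < T_b)}\prod_{k=0}^{n-1} p(x_k, x_{k+1}).
\end{align*}

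For the right-hand side, I would decompose the joint event $\{(X'_{\xi'(x)}, \ldots, X'_0) = (x_0, \ldots, x_n)\} \cap \{V(X'_0)\geq N,\, T'_x \leq T'_b < T'_*\}$ by further conditioning on $T'_b = m$ and summing over intermediate paths $(y_{n+1}, \ldots, y_{m-1})$ with $y_n = x$, $y_m = b$, $y_k \notin \{x, b\}$ and $V(y_k) < N$ for $n < k < m$. This factors as
\begin{align*}
\pi(x_n)\prod_{k=0}^{n-1} p'(x_{n-k}, x_{n-k-1}) \cdot Q(x),
\end{align*}
where the continuation factor $Q(x)$ encodes the excursion from $x$ to $b$ under $p'$ avoiding $\{x\} \cup \{V \geq N\}$ and depends on the fixed state $x$ but \emph{not} on the path $(x_0,\ldots,x_n)$.

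Next, applying $\pi(a)p'(a,c) = \pi(c)p(c,a)$ telescopically yields
\begin{align*}
\pi(x_n)\prod_{k=0}^{n-1} p'(x_{n-k}, x_{n-k-1}) = \pi(x)\prod_{k=0}^{n-1} p(x_k, x_{k+1}).
\end{align*}
Summing this identity over all admissible paths gives
\begin{align*}
P'_\pi\bigl(V(X'_0)\geq N,\, T'_x \leq T'_b < T'_*\bigr) = \pi(x)\, Q(x)\, P_x(T_* < T_b),
\end{align*}
and dividing the path-level identity by this marginal produces exactly $P_x((X_0,\ldots,X_{T_*}) = (x_0,\ldots,x_n) \mid T_* < T_b)$, which matches the left-hand side path-by-path and hence as a distribution.

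The main obstacle is the careful bookkeeping of the definition $\xi'(x) = \max\{0 \leq k \leq T'_b : X'_k = x\}$: one must verify that the continuation $(X'_{n+1}, \ldots, X'_{T'_b})$ avoids $x$ (so that $n$ is indeed the last visit to $x$ before $T'_b$), avoids $b$ strictly before $T'_b$, and keeps $V < N$ throughout, so that the factorization into a ``reversed segment'' and a ``continuation'' is exact and has a path-independent continuation factor $Q(x)$. The edge cases $x = b$ and the interpretation of $T'_x$ (with $\mathbb{N} = \{1,2,\ldots\}$ so that $T'_x \geq 1$ is a genuine hitting time even when $X'_0 = x$) need a brief separate check. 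Once this bookkeeping is in place, the identity is essentially the classical cycle formula for time-reversal of positive recurrent chains applied to the excursion from $\{V \geq N\}$ to $b$ passing through $x$ for the last time at index $\xi'(x)$.
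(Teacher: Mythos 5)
Note first that the paper does not actually prove Proposition~\ref{Conv}: it explicitly defers to \citet{Blanchet2013} and offers only an ``intuitive computation,'' namely the $\pi$-telescoping of the path sum for $\pi(b)\,P_b(T_*<T_b)$, which is precisely the $x=b$ case of the normalizing-constant identity you derive. Your argument completes that sketch into a bona fide proof along the same route: reverse the initial segment of a fixed admissible path using the detailed-balance relation $\pi(a)p'(a,c)=\pi(c)p(c,a)$, factor off the path-independent continuation probability $Q(x)=P'_x(T'_b<T'_x\wedge T'_*)$ via the Markov property at time $\xi'(x)$, obtain $P'_\pi\bigl(V(X'_0)\geq N,\,T'_x\leq T'_b<T'_*\bigr)=\pi(x)\,Q(x)\,P_x(T_*<T_b)$ by summing over paths, and divide. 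The argument is correct. Two minor remarks: the left-hand side evaluation uses only the elementary product formula for finite Markov-chain paths, not the strong Markov property; and the bookkeeping you flag does go through cleanly --- for $x\neq b$ the conditioning $\{T'_x\leq T'_b<T'_*\}$ forces $V(X'_k)<N$ and $X'_k\neq b$ for $1\leq k\leq\xi'(x)$, so the reversed initial segment is automatically an admissible left-hand-side path and the continuation event is exactly the one encoded in $Q(x)$, while for $x=b$ one has $\xi'(b)=T'_b$, the continuation is empty, $Q(b)=1$, and your normalizing identity reduces to the paper's displayed computation.
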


Proposition~\ref{Conv} states that we can sample $\{X_{k}\}_{0\leq k\leq
T_{\ast }}$ conditioned on the event $\{X_{0}=x\}\cap \{T_{\ast }<T_{b}\}$
by first sampling $X_{0}^{\prime }$ from $\pi $ conditioned on the event $%
\{V\left( X_{0}^{\prime }\right) \geq N\}$, then sampling $\{X_{k}^{\prime
}:1\leq k\leq T_{b}^{\prime }\}$ conditioned on the event $\{T_{x}^{\prime
}\leq T_{b}^{\prime }<T^{\ast \prime }\}$, thereby obtaining
\begin{equation*}
\{X_{k}^{\prime }:0\leq k\leq \xi ^{\prime }\left( x\right) \},
\end{equation*}%
and finally letting $X_{k}=X_{\xi ^{\prime }\left( x\right) -k}^{\prime }$
for $k\in \{0,...,\xi ^{\prime }\left( x\right) \}$.

We do not provide a proof of Proposition~\ref{Conv} here, but instead refer
to \citet{Blanchet2013}. However, to provide some intuition, we give some
computations here, which indicate the validity of the result. Towards this
end, let $\{K(x,y)\}_{x,y\in \mathcal{S}}$ denote the transition matrix of $X
$ and $\{K^{\prime }(x,y)\}_{x,y\in \mathcal{S}}$ the one of $X^{\prime }$.
Recall that
\begin{equation*}
K^{\prime }\left( y,x\right) =\pi \left( x\right) K\left( x,y\right) /\pi
\left( y\right) .
\end{equation*}%
Then, note that
\begin{align*}
\pi \left( b\right) P_{b}\left( T_{\ast }<T_{b}\right) & =\sum_{k\in \mathbb{%
N}}\sum_{x_{1},\ldots ,x_{k}\in \mathcal{S}}\pi \left( b\right) K\left(
b,x_{1}\right) K\left( x_{1},x_{2}\right) \cdots K\left(
x_{k-1},x_{k}\right)  \\
& \quad \times \mathbf{1}_{x_{1}\neq b,V\left( x_{1}\right) <N,\ldots
,x_{k-1}\neq b,V\left( x_{k-1}\right) <N,V\left( x_{k}\right) \geq N} \\
& =\sum_{k\in \mathbb{N}}\sum_{x_{1},\ldots ,x_{k}\in \mathcal{S}}K^{\prime
}\left( x_{1},b\right) \pi \left( x_{1}\right) K\left( x_{1},x_{2}\right)
\cdots K\left( x_{k-1},x_{k}\right)  \\
& \quad \times \mathbf{1}_{x_{1}\neq b,V\left( x_{1}\right) <N,\ldots
,x_{k-1}\neq b,V\left( x_{k-1}\right) <N,V\left( x_{k}\right) \geq N} \\
& \cdots  \\
& =\sum_{k\in \mathbb{N}}\sum_{x_{1},\ldots ,x_{k}\in \mathcal{S}}\pi \left(
x_{k}\right) K^{\prime }\left( x_{k},x_{k-1}\right) \cdots K^{\prime }\left(
x_{2},x_{1}\right) K^{\prime }\left( x_{1},b\right)  \\
& \quad \times \mathbf{1}_{V\left( x_{k}\right) \geq N,x_{k-1}\neq b,V\left(
x_{n-1}\right) <N,\ldots ,x_{1}\neq b,V\left( x_{1}\right) <N} \\
& =\sum_{k\in \mathbb{N}}\sum_{x_{0}^{\prime },\ldots ,x_{k-1}^{\prime }\in
\mathcal{S}}\pi \left( x_{0}^{\prime }\right) K^{\prime }\left(
x_{0}^{\prime },x_{1}^{\prime }\right) \cdots K^{\prime }\left(
x_{k-2}^{\prime },x_{n-1}^{\prime }\right) K^{\prime }\left( x_{k-1}^{\prime
},b\right)  \\
& \quad \times \mathbf{1}_{V\left( x_{0}^{\prime }\right) \geq
N,x_{1}^{\prime }\neq b,V\left( x_{1}^{\prime }\right) <N,\ldots
,x_{k-1}^{\prime }\neq b,V\left( x_{k-1}^{\prime }\right) <N} \\
& =E_{\pi }^{\prime }[P_{X_{0}^{\prime }}^{\prime }(T_{b}^{\prime }<T_{\ast
}^{\prime })\text{ }|\text{ }V\left( X_{0}^{\prime }\right) \geq N]P_{\pi
}^{\prime }\left( V\left( X_{0}^{\prime }\right) \geq N\right)
\end{align*}%
with the obvious notation for $E_{\pi }^{\prime }$. The previous identities
provide a representation for $P_{b}(T_{\ast }<T_{b})$ in terms of $P_{\pi
}^{\prime }\left( V\left( X_{0}^{\prime }\right) \geq N\right) $ and the
expectation involving the probability $P_{X_{0}^{\prime }}^{\prime
}(T_{b}^{\prime }<T_{\ast }^{\prime })$, where $X_{0}^{\prime }$ satisfies $%
V\left( X_{0}^{\prime }\right) \geq N$. It is shown in \citet{Blanchet2013}
that the contribution of
\begin{equation*}
E_{\pi }^{\prime }[P_{X_{0}^{\prime }}(T_{b}^{\prime }<T_{\ast }^{\prime
})|V\left( X_{0}^{\prime }\right) \geq N]
\end{equation*}%
remains bounded away from zero as $N$ increases to $\infty $ for a
significant class of processes of interest. Therefore, computing and
sampling rare event probabilities for sample path events of the form $%
\{T_{\ast }<T_{b}\}$ can be reduced to computing rare event probabilities
for the random variable $X_{0}^{\prime }$ following the stationary
distribution, via the event $\{V\left( X_{0}^{\prime }\right) \geq N\}$.

Note that all these computations are tailored to discrete-time processes,
and cannot very easily be extended to continuous processes. Our goal in this
section is to use Theorem~\ref{T mainmain} in order to obtain a suitable
analogue of Proposition~\ref{Conv} for continuous processes. We will not
provide full details of an extension in general, but will focusing on
proving a tractable representation for an Ornstein-Uhlenbeck process
conditioned on reaching a high level before returning to the origin.
Tractable means that the representation should be directly applicable for
the purposes of sampling.

\begin{theorem}
\label{T OU} Fix $N \in \mathbb{N}$ with $N>1$, a filtered probability space
$(\Omega, \mathcal{F}, \{\mathcal{F}_t\}_{t \geq 0}, P)$ with expectation
operator $E$, supporting four independent Brownian motions $B_i =
\{B_{i}(t)\}_{t\geq 0}$ for $i = 0, \ldots, 3$. Let $X=\{ X(t)\}_{t \geq 0}$ denote an Ornstein-Uhlenbeck process of the form
\begin{align*}
X( t) =1-\int_{0}^{t}X\left( s\right) \mathrm{d}s+B_0(t) ,
\end{align*}
and $X^{\prime }=\{ X^{\prime }(t)\}_{t \geq 0}$ be given by
\begin{align*} 
X^{\prime }( t) &=N-\left( B_{1}^{2}\left( t\right) +B_{2}^{2}\left(
t\right) +B_{3}^{2}\left( t\right) \right) ^{1/2}.
\end{align*}
For all $x \in {\mathbb{R}}$, let $T_x=\inf \{t\geq 0:X( t) =x\}$ and define
$T_x^{\prime }$ similarly. Define a random variable $M^{\prime }(\infty)$ by
\begin{align}  \label{eq:M'}
M^{\prime }\left( \infty \right) &=\exp \left( \frac{1}{2}\left(N^2 +
T_{0}^{\prime }-\int_{0}^{T_{0}^{\prime }}X^{\prime }\left( s\right) ^{2}%
\mathrm{d}s\right) \right)
\end{align}

Then the random variable $M'(\infty)$ has finite expectation under $P$ and
\begin{equation*}
E\left[ f\left( X\left( s\right) :0\leq s\leq T_{N}\right) |\{T_{N}<T_{0}\}%
\right] ={E}\left[ f\left( X^{\prime }\left( \xi ^{\prime }\left( 1\right)
-s\right) :0\leq s\leq \xi ^{\prime }\left( 1\right) \right) \frac{M^{\prime
}\left( \infty \right) }{E[M^{\prime }\left( \infty \right) ]}\right] ,
\end{equation*}%
where $\xi ^{\prime }\left( 1\right) =\max \{0\leq t\leq T_{0}:X^{\prime
}(t)=1\}$, for all continuous and
bounded functions  $f:D_{[0,\infty )}\rightarrow \mathbb{R}$.
\end{theorem}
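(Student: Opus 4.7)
The plan is to prove this identity in three stages: first, I establish a continuous-time analogue of Proposition~\ref{Conv} for the OU process; second, I rewrite the right-hand side using Williams' time-reversal theorem together with Girsanov's theorem to introduce $X'$ and $M'(\infty)$; and finally I verify that $M'(\infty)$ is a genuine Radon--Nikodym density via the second part of Theorem~\ref{T mainmain}. For the first stage, approximate $X$ by a sequence of reversible positive-recurrent birth--death chains $X^{(n)}$ on a refining lattice, chosen so that $X^{(n)} \Rightarrow X$ in the Skorokhod topology and so that the stationary measures $\pi^{(n)}$ converge weakly to the OU invariant measure with density proportional to $e^{-x^{2}}$. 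Applying Proposition~\ref{Conv} to $X^{(n)}$ with $V = \mathrm{id}$, $x_{n} \to 1$, $b_{n}\to 0$, $N_{n}\to N$, and exploiting reversibility (so the time-reversed chain equals the chain itself in law), passing to the limit yields a representation of the conditional expectation on the left-hand side in terms of an OU process $Z$ started at $N$ and killed at $0$, reversed from its last visit to level $1$; here the discrete conditioning event $\{T_{1}^{\prime}\le T_{0}^{\prime}<T_{\ast}^{\prime}\}$ becomes automatic in the limit since the limiting reversed path starts exactly at the boundary $N$ and, by continuity and the transience of its Bessel representation, never revisits $N$ on $(0,T_{0}']$.

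To rewrite this in the form stated, I invoke Williams' time-reversal theorem: a one-dimensional Brownian motion started at $N$ and killed at $0$ coincides in distribution, after time-reversal, with $(N - R(t))_{0 \le t \le \tau_{N}^{R}}$ for a three-dimensional Bessel process $R$ started at $0$, which is exactly the law of $X'$ run until $T_{0}'$. The OU from $N$ killed at $0$ is the Girsanov transform of this BM, and It\^o's formula applied to $Y^{2}$ under the BM law gives the terminal density
\begin{equation*}
\exp\!\left(-\!\int_{0}^{T_{0}} Y\, dY - \tfrac{1}{2}\!\int_{0}^{T_{0}} Y^{2}\, ds\right)
  = \exp\!\left(\tfrac{1}{2}\bigl(N^{2} + T_{0} - \textstyle\int_{0}^{T_{0}} Y^{2}\, ds\bigr)\right),
\end{equation*}
which, since time-reversal leaves $T_{0}$ and the integral invariant, reads as $M'(\infty)$ along $X'$. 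Therefore, once $E[M'(\infty)]$ is shown to be finite and positive, under the probability measure $Q$ with $dQ/dP = M'(\infty)/E[M'(\infty)]$ the process $X'$ has the law of the time-reversal of OU from $N$ killed at $0$, the last-passage time $\xi'(1)$ corresponds to the last hit of $1$ by that OU before $T_{0}$, and the reversed segment produces precisely the right-hand side of the theorem.

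The main obstacle is to establish $E[M'(\infty)]<\infty$ without invoking an \emph{a priori} Girsanov change of measure, and this is where Theorem~\ref{T mainmain} enters. For each $k\in\N$, let $\rho_{k} := \inf\{t\ge 0 : \int_{0}^{t} X'(s)^{2}\, ds + t \ge k\} \wedge T_{0}'$ and define
\begin{equation*}
M_{k}(\infty) := \exp\!\left(\tfrac{1}{2}\!\left(N^{2} + \rho_{k} - X'(\rho_{k})^{2} - \int_{0}^{\rho_{k}} X'(s)^{2}\, ds\right)\right),
\end{equation*}
which is the terminal value of a bounded-exponent stochastic exponential and hence of a true martingale (Corollary~\ref{C bounded} applied after a further localization if needed), so $E[M_{k}(\infty)] = 1$ for all $k$, and $M_{k}(\infty)\to M'(\infty)$ $P$-a.s. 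Under $Q_{k}:=M_{k}(\infty)\cdot P$, the stopped reversal of $X'$ has the law of an OU process started at $N$ and killed at $0$ or at $\rho_{k}$; standard estimates (exponential tails of $T_{0}$ for OU started at $N$, and $L^{p}$ bounds on $\sup_{t\le T_{0}} |Z(t)|$) provide uniform control of $\rho_{k}$ and $\int_{0}^{\rho_{k}} X'^{2}\, ds$ under $Q_{k}$, giving $\sup_{k} Q_{k}(M_{k}(\infty)\ge \kappa)\to 0$ as $\kappa\uparrow\infty$. Theorem~\ref{T mainmain}(2) then yields $E[M'(\infty)]=1$, which combined with the preceding two stages gives the claimed identity. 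The most delicate step will be the tightness verification under $\{Q_{k}\}$, since one must control both the truncation time $\rho_{k}$ and the quadratic functional simultaneously; the moment bounds for OU on an interval of exponentially distributed length are the essential ingredient.
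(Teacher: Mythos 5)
Your first two stages broadly match the paper's route --- discretize $X$ by reversible birth--death chains, apply Proposition~\ref{Conv}, and pass to the limit, noting that the discrete conditioning events become automatic in the continuum --- but stage three contains a genuine gap.

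The crux is the claim that $E[M_k(\infty)]=1$, which you justify by asserting that $M_k$ is the terminal value of a ``bounded-exponent stochastic exponential'' to which Corollary~\ref{C bounded} applies. Under the $P$-law, the process $M(t):=\exp\bigl(\tfrac12(N^2+t-X'(t)^2-\int_0^t X'(s)^2\,\dd s)\bigr)$ satisfies, by It\^o's formula, $\dd M(t)=-M(t)X'(t)\,\dd X'(t)$, i.e.\ $M=\mathcal{E}\bigl(-\int X'\,\dd X'\bigr)$. But $X'$ solves \eqref{eq:Bessel}, so $-\int X'\,\dd X'=-\int X'\,\dd B+\int\tfrac{X'(s)}{N-X'(s)}\,\dd s$ has a strictly positive drift part on $(0,T_0')$. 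Hence $M$ is a strict local \emph{sub}martingale under $P$, not a local martingale; Corollary~\ref{C bounded} (which treats $\mathcal{E}(L)$ for $L$ a continuous local martingale) does not apply, $E[M_k(\infty)]>1$ in general, and $Q_k:=M_k(\infty)\cdot P$ is not a probability measure without renormalization. Correspondingly, the target conclusion $E[M'(\infty)]=1$ is not what the theorem asserts (it only asserts finiteness, and the factor $E[M'(\infty)]$ appears explicitly in the representation precisely because it is not equal to one). In Girsanov terms, $M'(\infty)$ is the Brownian-to-OU density evaluated along a Bessel path, not the Bessel-to-OU density; the latter would carry the additional factor $\exp\bigl(-\int\tfrac{X'(s)}{N-X'(s)}\,\dd s\bigr)$ that Remark~2 makes visible.

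There is also a circularity risk in your tightness check: you propose to use the assertion that under $Q_k$ the reversal of $X'$ is an OU process to derive moment bounds, but this assertion is essentially the change-of-measure statement being proved. The paper avoids both problems by staying at the discrete level until the very end: $M_n'(\infty)$ is a finite product of conditional-expectation-one factors under the simple-random-walk measure $\widehat{P}_n$, the Doob $h$-transform to $\widehat{P}^h_n$ is explicit, the normalized $M_n(\infty)=M_n'(\infty)/\widehat{E}^h_n[M_n'(\infty)]$ is a genuine density under $\widehat{P}^h_n$ by construction, and the tightness check reduces --- via the algebraic identity $Q_n(T_0^n>\kappa)=P_n\bigl(T_0^n>\kappa\,\big|\,\{X_n(0)=N\}\cap\{T_0^n<T_N^n\}\bigr)$ together with a strong Markov bound --- to the elementary tightness of $T_0^n$ under the approximating (essentially unconditional) laws. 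That discrete bookkeeping is the load-bearing step your proposal replaces with vaguer ``standard estimates on OU,'' and it cannot be omitted.
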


\begin{remark}
The previous result can be used to efficiently estimate conditional
expectations involving Ornstein-Uhlenbeck processes, conditioned on $\{T_{N}<T_{0}\}$ when $N$ is large, in a way that is
analogous to the methods described in \citet{Blanchet2013}. This then  leads to
algorithms that have linear running time uniformly as $N\uparrow \infty $.
This approach will be studied
in future work.\qed
\end{remark}

\begin{remark}
It is well known that $X^{\prime }$, the modified three-dimensional Bessel
process in Theorem~\ref{T OU}, satisfies the stochastic differential
equation
\begin{align}  \label{eq:Bessel}
X^{\prime }( \cdot) &=N-\int_0^\cdot \frac{1}{N-X^{\prime }( t) } \mathrm{d}
t+ B(\cdot)
\end{align}%
for some Brownian motion $B(\cdot) = \{B(t)\}_{t \geq 0}$.

Also, note that
\begin{align*}
M^{\prime }\left( \infty \right) &=\exp \left( -\int_0^{T_0^{\prime }}
X^{\prime }(s) \mathrm{d} X^{\prime }(s) - \frac{1}{2}\int_{0}^{T_{0}^{%
\prime }}X^{\prime }\left( s\right) ^{2}\mathrm{d}s\right) \\
&= \exp \left( -\int_0^{T_0^{\prime }} X^{\prime }(s) \mathrm{d} B(s) -
\frac{1}{2}\int_{0}^{T_{0}^{\prime }}X^{\prime }\left( s\right) ^{2}\mathrm{d%
}s\right) \exp\left(\int_0^{T_0^{\prime }} \frac{X^{\prime }(s)}{N-
X^{\prime }(s)} \mathrm{d} s\right),
\end{align*}
where the first equality follows from an application of It\^o's lemma and
the last equality from \eqref{eq:Bessel}. \qed
\end{remark}

\begin{proof}[Proof of Theorem~\ref{T OU}]
We will consider a suitably defined class of discrete processes that
approximate $X\left( \cdot \right) $ and then apply Proposition~\ref{Conv}.
More precisely, we construct a sequence of stochastic processes $%
\{X_{n}\}_{n \in \mathbb{N}}$ with $X_{n}=\{X_{n}(t)\}_{t\geq 0}$, each
taking values in the state space $\mathcal{S}_{n}=\{k\Delta
_{n}^{1/2}\}_{k\in \mathbb{N}_{0}}$, where $\Delta _{n}:=2^{-2n}$. For each $%
n \in \mathbb{N}$, we let $X_{n}$ evolve as a pure jump process, jumping
only at times $\{k\Delta _{n}\}_{k\in \mathbb{N}}$ with
\begin{equation*}
\Delta X_{n}\left( k\Delta _{n}\right) =\Delta _{n}^{1/2}J_{n}\left(
k,X_{n}\left( (k-1)\Delta _{n}\right) \right) ,
\end{equation*}%
where $\{J_{n}(k,\cdot )\}_{k\in \mathbb{N}}$ satisfies
\begin{equation*}
J_{m}(k,y)=\left( 2\mathbf{1}_{\{U(k)\leq 2^{-1}(1-\Delta _{n}^{1/2}(y\wedge
n)))\}}-1\right) \mathbf{1}_{y>0}+\mathbf{1}_{y=0} \in \{-1,1\},
\end{equation*}%
for all $y \geq 0$ and $\{U\left( k\right) \}_{k\in \mathbb{N}}$ is a
sequence of uniformly distributed in $(0,1)$ i.i.d.~random variables. In
simple words, as long as $X_{n}\left( k\Delta _{n}\right) >0$, the next
increment is $+1$ with probability $(1-q_n^k)/2\in (0,1)$ and $-1$ with
probability $(1+q_n^k)/2\in (0,1)$, where $q_n^k = \Delta
_{n}^{1/2}(X_{n}(k\Delta _{n})\wedge n)$. For each $n \in \mathbb{N}$, we
denote the induced probability distribution of $X_n$ on the canonical path space and the corresponding
expectation operator by $P_n$ and $E_n$. Note that $E_n[J_{n}(k,y)]=-\Delta
_{n}^{1/2}(y\wedge n)$ for all $k \in \mathbb{N}$ and $y \geq 0$. Moreover,
we define $\xi ^{n}\left( 1\right) =\max \{0 \leq t \leq T_0^n:X_{n}\left(
t\right) =1\}$, where $T_{s}^n=\inf \{t\geq 0:X_n\left( t\right) =s\}$ for
all $s\in \mathcal{S}_{n}$ and $n \in \mathbb{N}$.

We now fix $n \in \N$.
Let us introduce the probability measure $\widehat{P}_{n}$ under which $%
J_{n}(\cdot ,\cdot )$ increases by $1$ or $-1$ with probability $1/2$ until
time $T_{0}^{n}$. Let us also introduce the
random variable
\begin{equation*}
M_{n}^{\prime }\left( \infty\right) =\prod_{k=1}^{ T_{0}^{n}/\Delta
_{n}}\left( 1-q_{n}^{k-1}J_{n}\left( k,X_{n}\left( \left( k-1\right) \Delta
_{n}\right) \right) \right).
\end{equation*}%
It is clear that
\begin{equation*}
\widehat{E}_{n}\left[ 1-q_{n}^{k-1}J_{n}\left( k,X_{n}\left( \left(
k-1\right) \Delta _{n}\right) \right) |X_{n}\left( \left( k-1\right) \Delta
_{n}\right) \right] =1
\end{equation*}%
for all $k\in \mathbb{N}$ and that $P(T_{0}^{n}<\infty )=1$, which yields
\begin{equation*}
\widehat{E}_{n}\left[ M_{n}^{\prime }\left( \infty \right) \right]
=\sum_{k=1}^{\infty }\widehat{E}_n\left[ M_{n}^{\prime }\left( \infty
\right) \mathbf{1}_{\{T_{0}^{n}=k\}}\right] =\sum_{k=1}^{\infty }{P}%
_n\left( T_{0}^{n}=k\right) =1.
\end{equation*}%
This implies that $P_n\ll \widehat{P}_n$ and $%
M_{n}^{\prime }\left( \infty \right) ={\mathrm{d}P_n}/{\mathrm{d}\widehat{P%
}_n}$.

Let us write
\begin{equation*}
\widehat{P}^{h}_n(\cdot )=\widehat{P}_n(\cdot | \{X_{n}\left( 0\right) =N\}
\cap \{T_{0}^{n}<T_{N}^{n}\}).
\end{equation*}
In order to describe the conditional dynamics of $X$ under $\widehat{P}^{h}_n
$, we apply Doob's $h$-transform and define the function $h^n:[0,N]
\rightarrow (0,1)$ by
\begin{align*}
h^n\left( x\right) =\widehat{P}_n(T_{0}^{n}<T_N^{n}|\{X_{n}( 0)
=x\})=\frac{N-x}{N}
\end{align*}
for all $x \in [0,N]$. Observe that under $\widehat{P}%
^{h}_n$ the random variable $J_n(k,y)$ is $1$ with probability
\begin{align*}
\frac{h^n(y+\Delta _{n}^{1/2})}{2h(y )}=\frac{1}{2}\left( 1-\frac{\Delta
_{n}^{1/2}}{N-y }\right)
\end{align*}
and $-1$ with probability
\begin{align*}
\frac{h^n(y-\Delta _{n}^{1/2})}{2h(y )}=\frac{1}{2}\left( 1+\frac{\Delta
_{n}^{1/2}}{N-y }\right),
\end{align*}
conditional on the event $\{T_0^n \wedge T_N^n > k \Delta_n^{1/2}\}$, for all $k \in \N$.

For any random variable $H$ depending only on $\{X_{n}( k\Delta_n)
:0\leq k\leq T_{0}^{n}/\Delta \}$ we have
\begin{align}
E_{n}\left[ H|\{X_{n}\left( 0\right) =N\}\cap \{T_{0}^{n}<T_{N}^{n}\}\right]
& =\frac{E_{n}\left[ \left. H\mathbf{1}_{\{T_{0}^{n}<T_{N}^{n}\}}\right\vert
\{X_{n}\left( 0\right) =N\}\right] }{P_{n}\left(
T_{0}^{n}<T_{N}^{n}|\{X_{n}\left( 0\right) =N\}\right) } \nonumber \\
& =\frac{\widehat{E}_{n}\left[ \left. H\mathbf{1}_{\{T_{0}^{n}<T_{N}^{n}%
\}}M_{n}^{\prime }\left( \infty \right) \right\vert \{X_{n}\left( 0\right)
=N\}\right] }{P_{n}\left( T_{0}^{n}<T_{N}^{n}|\{X_{n}\left( 0\right)
=N\}\right) }\nonumber \\
& =\widehat{E}_{n}\left[ \left. HM_{n}^{\prime }\left( \infty \right)
\right\vert \{X_{n}\left( 0\right) =N\}\cap \{T_{0}^{n}<T_{N}^{m}\}\right]
\frac{\widehat{P}_{n}(T_{0}^{n}<T_{N}^{n}|\{X_{n}\left( 0\right) =N\})}{%
P_n\left( T_{0}^{n}<T_{N}^{n}|\{X_{n}\left( 0\right) =N\}\right) } \nonumber\\
& =\widehat{E}^{h}_{n}\left[ HM_{n}^{\prime }\left( \infty \right) \right]
\frac{1}{\widehat{E}^{h}_{n}[M_{n}^{\prime }\left( \infty \right)]},  \label{eq:6}
\end{align}%
where the last equality follows from the definition of $\widehat{P}^{h}_{n}$
and using $H=1$.

The process $X_{n}$, being a birth-death process, is
time reversible. Thus,
Proposition~\ref{Conv} yields that for each continuous bounded function $f: D_{[0,\infty )} \rightarrow R$,%
\begin{align*}
& E_{n}\left[ f(X_{n}(s):0\leq s\leq T_{N})|\{X_{n}\left( 0\right) =1\}\cap
\{T_{N}^{n}<T_{0}^{n}\}\right]  \\
& \quad =E_{n}\left[ f\left( X_{n}\left( \xi _{n}(1)-s\right) :0\leq s\leq
\xi _{n}(1)\right) |\{X_{n}\left( 0\right) =N\}\cap \{T_{0}^{n}<T_{N}^{n}\}%
\right]  \\
& \quad =\widehat{E}^{h}_{n}\left[ f\left( X_{n}\left( \xi _{n}(1)-s\right)
:0\leq s\leq \xi _{n}(1)\right) M_{n}\left( \infty \right) \right] ,
\end{align*}%
where
\begin{align} \label{eq: Mn}
	M_{n}\left( \infty \right) = \frac{M_{n}^{\prime }\left( \infty \right)}{
\widehat{E}^{h}_{n}[M_{n}^{\prime }\left( \infty \right) ]}.
\end{align}

Next, the facts that $1-x=\exp \left( -x-x^{2}/2+O\left( x^{3}\right)
\right) $ as $x\downarrow 0$ and $J_{n}^{2}=1$ imply that
\begin{equation*}
M_{n}^{\prime }\left( \infty \right) =\prod_{k=1}^{T_{0}^{n}/\Delta
_{n}}\exp \left( -q_{n}^{k-1}J_{n}\left( k,X_{n}\left( \left(
k-1\right) \Delta _{n}\right) \right) -\frac{(q_{n}^{k-1})^{2}}{2}%
+O(\Delta _{n}^{3/2})\right)
\end{equation*}%
as $n\uparrow \infty $, where the term $O(\Delta _{n}^{3/2})$ is actually
uniform in $X_{n}\left( ( k-1) \Delta _{n}\right) $ for all $%
k\leq T_{0}^{n}\wedge T_{N}^{n}$.

Note that, if $n>N$, on the event $\{X_{m}\left( 0\right) =N\}\cap
\{T_{0}^{n}<T_{N}^{n}\}$,
\begin{align*}
\frac{1}{2}N^{2}& =\frac{1}{2}\left( X_{n}\left( T_{N }^{n}\right)
-X_{n}\left( 0\right) \right) ^{2} \\
& =\frac{1}{2}\left( \sum_{k=1}^{T_{0 }^{n}/\Delta _{n}}\Delta
_{n}^{1/2}J_{n}\left( k,X_{n}\left( \left( k-1\right) \Delta _{n}\right)
\right) \right) ^{2} \\
& =\frac{1}{2}\Delta _{n}\sum_{k=1}^{T_{0 }^{n}/\Delta
_{n}}J_{n}^{2}\left( k,X_{n}\left( \left( k-1\right) \Delta _{n}\right)
\right)  \\
& \qquad +\sum_{k=1}^{T_{0 }^{n}/\Delta _{n}}\sum_{j=1}^{k-1}\Delta
_{n}^{1/2}J_{n}\left( k,X_{n}\left( \left( k-1\right) \Delta _{n}\right)
\right) \Delta _{n}^{1/2}J_{n}\left( j,X_{n}\left( \left( j-1\right) \Delta
_{n}\right) \right)  \\
& =\frac{1}{2}T_{0 }^{n}+N^{2}+\Delta _{n}^{1/2}\sum_{k=1}^{T_{0
}^{n}/\Delta _{n}}J_{n}\left( k,X_{n}\left( \left( k-1\right) \Delta
_{n}\right) \right) X_{n}\left( \left( k-1\right) \Delta _{m}\right) .
\end{align*}%
Therefore, if $n>N$, on the event $\{X_{n}( 0) =N\}\cap
\{T_{0}^{n}<T_{N}^{n}\}$,
\begin{equation}
M_{n}^{\prime }\left( \infty \right) =\exp \left( \frac{N^{2}}{2}+\frac{%
T_{0}^{n}}{2}-\sum_{k=1}^{T_{0}^{n}/\Delta _{n}}\frac{(q_{n}^{k-1})^{2}}{2}%
+T_{0}^{n}O(\Delta _{n}^{1/2})\right) .  \label{Eq_Mprime}
\end{equation}

It is not difficult to verify using the method of weak convergence of
generators in \citet{EK_Markovian} that%
\begin{align} \label{eq:8}
\left(P_n(\cdot |\{X_n(0) = 1\}),X_{n}\left( \cdot \wedge T_{N}^{n}\wedge T_{0}^{n}\right) \right)\overset{%
\mathfrak{w}}{\Longrightarrow }(P,X\left( \cdot \wedge T_{N}\wedge
T_{0}\right) )  \text{ } (n \uparrow \infty)
\end{align}%
on $D_{[0,\infty)}$.
Proposition~5.33 in \citet{Pitman_1975} implies that
\begin{equation*}
\left(\widehat{P}^{h}_{n},X_{n}\left( \cdot \wedge T_{0}^{n}\right) \right)\overset{%
\mathfrak{w}}{\Longrightarrow }(P,X^{\prime }\left( \cdot \wedge
T_{0}\right) ) \text{ } (n \uparrow \infty)
\end{equation*}%
on $D_{[0,\infty)}$.
The continuous mapping
principle, applied with a standard extension to handle the stopping times $\{T_0^n\}_{n \in \N}$, yields the weak convergence result
\begin{equation*}
\left(\widehat{P}^{h}_{n},M_{n}^{\prime }\left( \infty \right) \right)\overset{%
\mathfrak{w}}{\Longrightarrow }(P,M^{\prime }( \infty ) ) \text{ } (n \uparrow \infty),
\end{equation*}%
where $M'(\infty)$ is defined in \eqref{eq:M'}.

Observe that there exists also a subsequence $\{n_m\}_{m \in \N}$ such that $C = \lim_{m \uparrow \infty} \widehat E^h_{n_m}[M'_{n_m} (\infty)]$ exists in $[0, \infty]$. Then, an application of Fatou's lemma, in conjunction with a Skorokhod embedding argument also yields that $C>0$.  Thanks to the continuous mapping principle, in order to conclude the proof of the statement it is now sufficient to show that  $\{M_{n}(\infty)\}_{n \in \N}$, given in \eqref{eq: Mn}, is tight under the sequence $\{Q_n\}_{n \in \N}$  of probability measures, defined by  $\dd Q_n  = M_n(\infty) \dd \widehat P^h_n$. By Fatou's lemma and by  \eqref{Eq_Mprime}, it is sufficient to show the tightness of $\{T_0^{n}\}_{n \in \N}$ under $\{Q_{n}\}_{n\in \N}$.
For each $n\in \N$, we have
\begin{align}
	Q_n(T_0^n > \kappa) &= \widehat E^h_n\left[\1_{\{\kappa < T_0^n\}} M_n(\infty)\right]
		= \frac{\widehat E_n\left[\left. \1_{\{\kappa < T^n_0 < T_N^n\}} M_n(\infty) \right|\{X_n(0) = N\}\right]}{\widehat{P}_n(T^n_0 < T_N^n| \{X_n(0) = N\})} \nonumber\\
		&= P_n\left( T_{0}^{n}>\kappa |  \{X_{n}(0)=N\} \cap \{T_{0}^{n}<T_{N}^{n}\} \right)     \label{eq:9}\\
&\leq \frac{P_n\left( T_{0}^{n}>\kappa |\{X_{n}\left( 0\right) =1 \}\right)
P_n\left( T_{1 }^{n}<T_{N}^{n}  | \{X_{n}(0)=N\}\right) }{%
P_n\left( T_{0}^{n}<T_{N}^{n} |\{
X_{n}(0)=N\}\right) }   \nonumber\\
&= \frac{P_n\left( T_{0}^{n}>\kappa | \{X_{n}\left( 0\right) =1 \} \right)
}{P_n\left( T_{0}^{n}<T_{N}^{n}  |  \{X_{n}(0)=1 \}\right) },  \label{eq:10}
\end{align}
where the equality \eqref{eq:9} comes from \eqref{eq:6}.

Recall \eqref{eq:8}; therefore%
\begin{equation*}
\liminf_{n \uparrow \infty }P_n\left( T_{0}^{n}<T_{N}^{n}  |  \{X_{n}(0)=1 \} \right) =P\left( T_{0}<T_{N} \right) >0,
\end{equation*}%
and $\left\{ T_{0}^{n}\right\} _{n\in \N}$ is tight under $\{P_{n}\}_{n \in \N}$; thus, $\kappa
>0$ can be chosen so that the right hand side of \eqref{eq:10} can be made
as small as desired as $n\uparrow \infty $.
This concludes the proof.
\end{proof}

\begin{remark}
We end our discussion by noting that the previous result illustrates the
convenience of Theorem~\ref{T mainmain}. A standard approach would involve
verifying directly the uniform integrability of the process $\{M_{n}^{\prime
}(\infty )\}_{n\in \mathbb{N}}$ under $\{\widehat P_{n}^h(\cdot
| \{T_{0}^{n}<T_{N}^{n}\})\}_{n\in \mathbb{N}}$, and the
expectation of the term $\exp \left( T_{0}^{n}/2\right) $ is difficult to
handle. Our technique bypasses the need for this by a simple application of
the strong Markov property as shown in \eqref{eq:10}. \qed
\end{remark}

\setlength{\bibsep}{1pt}
\bibliographystyle{apalike}
\bibliography{aa_bib}

\begin{thebibliography}{}

\bibitem[Bene\v{s}, 1971]{Benes_1971}
Bene\v{s}, V. (1971).
\newblock {Existence of optimal stochastic control laws}.
\newblock {\em SIAM Journal on Control and Optimization}, 9:446--472.

\bibitem[Blanchet, 2013]{Blanchet2013}
Blanchet, J. (2013).
\newblock Optimal sampling of overflow paths in {J}ackson networks.
\newblock {\em Mathematics of Operations Research}, 38(4):698--719.

\bibitem[Blei and Engelbert, 2009]{Blei_Engelbert_2009}
Blei, S. and Engelbert, H.-J. (2009).
\newblock On exponential local martingales associated with strong {M}arkov
  continuous local martingales.
\newblock {\em Stochastic Processes and Their Applications}, 119(9):2859--2880.

\bibitem[Br\'emaud, 1981]{Bremaud_1981}
Br\'emaud, P. (1981).
\newblock {\em {Point Processes and Queues. Martingale Dynamics.}}
\newblock {Springer}.

\bibitem[Cheridito et~al., 2005]{CFY}
Cheridito, P., Filipovi\'c, D., and Yor, M. (2005).
\newblock Equivalent and absolutely continuous measure changes for
  jump-diffusion processes.
\newblock {\em Annals of Applied Probality}, 15(3):1713--1732.

\bibitem[Engelbert and Schmidt, 1984]{Engelbert_Schmidt_1984}
Engelbert, H. and Schmidt, W. (1984).
\newblock On exponential local martingales connected with diffusion processes.
\newblock {\em Mathematische Nachrichten}, 119:97--115.

\bibitem[Ethier and Kurtz, 1986]{EK_Markovian}
Ethier, S.~N. and Kurtz, T.~G. (1986).
\newblock {\em Markov Processes: Characterization and Convergence}.
\newblock John Wiley \& Sons.

\bibitem[Giesecke and Zhu, 2013]{Giesecke_2013}
Giesecke, K. and Zhu, S. (2013).
\newblock Transform analysis for point processes and applications in credit
  risk.
\newblock {\em Mathematical Finance}, 23(4):742--762.

\bibitem[Jacod, 1975]{Jacod_1975}
Jacod, J. (1975).
\newblock Multivariate point processes: predictable projection,
  {R}adon-{N}ikodym derivatives, representation of martingales.
\newblock {\em Probability Theory and Related Fields}, 31(3):235--253.

\bibitem[Jacod and Shiryaev, 2003]{JacodS}
Jacod, J. and Shiryaev, A.~N. (2003).
\newblock {\em Limit Theorems for Stochastic Processes}.
\newblock Springer, Berlin, 2nd edition.

\bibitem[Kallsen and Muhle-Karbe, 2010]{KMK2010}
Kallsen, J. and Muhle-Karbe, J. (2010).
\newblock {Exponentially affine martingales, affine measure changes and
  exponential moments of affine processes}.
\newblock {\em Stochastic Processes and Their Applications}, 120(2):163--181.

\bibitem[Karatzas and Shreve, 1991]{KS1}
Karatzas, I. and Shreve, S.~E. (1991).
\newblock {\em Brownian Motion and Stochastic Calculus}.
\newblock Springer, New York, 2nd edition.

\bibitem[Kazamaki, 1994]{Kazamaki_1994}
Kazamaki, N. (1994).
\newblock {\em {Continuous Exponential Martingales and BMO}}.
\newblock {Springer}.

\bibitem[Kazamaki and Sekiguchi, 1983]{Kazamaki_1983}
Kazamaki, N. and Sekiguchi, T. (1983).
\newblock Uniform integrability of continuous exponential martingales.
\newblock {\em Tohoku Mathematical Journal}, 35:289--301.

\bibitem[Lepingle and M\'emin, 1978]{Lepingle_Memin_Sur}
Lepingle, D. and M\'emin, J. (1978).
\newblock Sur l'int\'egrabilit\'e uniforme des martingales exponentielles.
\newblock {\em Zeitschrift f{\"u}r Wahrscheinlichkeitstheorie und Verwandte
  Gebiete}, 42:175--203.

\bibitem[Mijatovi\'c and Urusov, 2012]{MU_martingale}
Mijatovi\'c, A. and Urusov, M. (2012).
\newblock On the martingale property of certain local martingales.
\newblock {\em Probability Theory and Related Fields}, 152(1):1--30.

\bibitem[Novikov, 1972]{Novikov}
Novikov, A. (1972).
\newblock On an identity for stochastic integrals.
\newblock {\em Theory of Probability and its Applications}, 17(4):717--720.

\bibitem[Pitman, 1975]{Pitman_1975}
Pitman, J.~W. (1975).
\newblock One-dimensional {B}rownian motion and the three-dimensional {B}essel
  process.
\newblock {\em Advances in Applied Probability}, 7(3):511--526.

\bibitem[Portenko, 1975]{Portenko_1975}
Portenko, N. (1975).
\newblock {Diffusion processes with unbounded drift coefficient}.
\newblock {\em Theory of Probability and its Applications}, 20:27--37.

\bibitem[Protter and Shimbo, 2008]{Protter_Shimbo}
Protter, P. and Shimbo, K. (2008).
\newblock {No arbitrage and general semimartingales}.
\newblock In Ethier, S.~N., Feng, J., and Stockbridge, R.~H., editors, {\em
  Markov Processes and Related Topics: A Festschrift for Thomas G. Kurtz},
  pages 267--283. Institute of Mathematical Statistics.

\bibitem[Robert, 2003]{Robert2003stochastic}
Robert, P. (2003).
\newblock {\em Stochastic Networks and Queues}.
\newblock Springer.

\bibitem[Ruf, 2013a]{Ruf_martingale}
Ruf, J. (2013a).
\newblock The martingale property in the context of stochastic differential
  equations.
\newblock Preprint, arXiv:1306.0218.

\bibitem[Ruf, 2013b]{Ruf_Novikov}
Ruf, J. (2013b).
\newblock A new proof for the conditions of {N}ovikov and {K}azamaki.
\newblock {\em Stochastic Processes and Their Applications}, 123:404--421.

\bibitem[Stroock and Varadhan, 2006]{SV_multi}
Stroock, D.~W. and Varadhan, S. R.~S. (2006).
\newblock {\em Multidimensional {D}iffusion {P}rocesses}.
\newblock Springer, Berlin.
\newblock Reprint of the 1997 edition.

\bibitem[Stummer, 1993]{Stummer_1993}
Stummer, W. (1993).
\newblock The {N}ovikov and entropy conditions of multidimensional diffusion
  processes with singular drift.
\newblock {\em Probability Theory and Related Fields}, 97(4):515--542.

\bibitem[Yan, 1988]{Yan_1988}
Yan, J.-A. (1988).
\newblock {On the existence of diffusions with singular drift coefficient}.
\newblock {\em Acta Mathematicae Applicatae Sinica. English Series},
  4(1):23--29.

\end{thebibliography}
{} {} 
{}

\end{document}